\numberwithin{equation}{section}
\newtheorem{theorem}{Theorem}[section]
\newtheorem{proposition}[theorem]{Proposition}
\newtheorem{lemma}[theorem]{Lemma}
\newtheorem{corollary}[theorem]{Corollary}
\theoremstyle{definition}
\newtheorem{conjecture}[theorem]{Conjecture}
\theoremstyle{remark}
\newtheorem{remark}[theorem]{Remark}
\newcommand{\Z}{\mathbb{Z}}
\newcommand{\C}{\mathbb{C}}
\newcommand{\ZZ}{\mathcal{Z}}
\title{$p$-local stable splitting of quasitoric manifolds}
\author{Sho Hasui}
\address{Department of Mathematics, Kyoto University, Kyoto, 606-8502, Japan}
\email{s.hasui@math.kyoto-u.ac.jp}
\author{Daisuke Kishimoto}
\address{Department of Mathematics, Kyoto University, Kyoto, 606-8502, Japan}
\email{kishi@math.kyoto-u.ac.jp}
\author{Takashi Sato}
\address{Department of Mathematics, Kyoto University, Kyoto, 606-8502, Japan}
\email{t-sato@math.kyoto-u.ac.jp}
\date{\today}
\subjclass[2010]{Primary 57S15; Secondary 55P40, 55P60}
\keywords{quasitoric manifold, stable splitting, localization, moment-angle complex}
\begin{document}

\maketitle

\baselineskip 18pt

\begin{abstract}
We show a homotopy decomposition of $p$-localized suspension $\Sigma M_{(p)}$ of a quasitoric manifold $M$ by constructing power maps. As an application we investigate the $p$-localized suspension of the projection $\pi$ from the moment-angle complex onto $M$, from which we deduce its triviality for $p>\dim M/2$. We also discuss non-triviality of $\pi_{(p)}$ and $\Sigma^\infty\pi$.
\end{abstract}

\section{Introduction and statement of results}

Manifolds which are now known as quasitoric manifolds were introduced by Davis and Januszkiewicz \cite{DJ} as a topological counterpart of smooth projective toric varieties, and have been the subject of recent interest in the study of manifolds with torus action. As well as toric varieties, quasitoric manifolds have been studied in a variety of contexts where combinatorics, geometry, and topology interact in a fruitful way. We refer the reader to the exposition \cite{BP} written by Buchstaber and Panov for basics of quasitoric manifolds. This note studies a topological aspect of quasitoric manifolds involving their $p$-localized suspension. A quasitoric manifold $M$ over a simple $n$-polytope $P$ is by definition a $2n$-manifold on which the compact $n$-torus $T^n$ acts in such a way that the orbit space $M/T^n$ is identified with the simple polytope $P$ as manifolds with corners. A fundamental example of quasitoric manifolds is the complex projective space $\C P^n$ which is the only quasitoric manifold over the $n$-simplex, whereas there are several quasitoric manifolds on the same simple polytope in general. Observe that since $\C P^n$ admits power maps, the $p$-localization of the suspension $\Sigma\C P^n_{(p)}$ splits into a wedge of $p-1$ spaces as in \cite{MNT}. We prove that any quasitoric manifold also admits power maps, and as a consequence the $p$-localization of its suspension splits into a wedge of $p-1$ spaces.

\begin{theorem}
\label{main-split}
For a quasitoric manifold $M$ there is a homotopy equivalence
$$\Sigma M_{(p)}\simeq X_1\vee\cdots\vee X_{p-1}$$
such that for each $i$, $\widetilde{H}_*(X_i;\Z)=0$ unless $*\equiv 2i+1\mod 2(p-1)$.
\end{theorem}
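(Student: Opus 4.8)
The plan is to realize the ``power maps'' on $M$ geometrically, extract from them a self-map of $\Sigma M_{(p)}$ acting on (co)homology by the powers of a primitive root modulo $p$, and then carry out the Mimura--Nishida--Toda idempotent argument.

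First I would construct, for each integer $k\geq 1$, a self-map $\psi^k\colon M\to M$. In the standard model $M=M(P,\lambda)=(P\times T^n)/{\sim}$, set $\psi^k[x,t]=[x,t^k]$; this descends to a well-defined continuous map because $T^n$ is abelian and each isotropy subgroup is a subgroup, and it satisfies $\psi^1=\mathrm{id}$, $\psi^k\circ\psi^l=\psi^{kl}$ and $\psi^k(s\cdot m)=s^k\cdot\psi^k(m)$. The key point is the effect on cohomology. Since $H^*(M;\Z)$ is torsion-free, concentrated in even degrees, and generated as a ring by the classes $v_1,\dots,v_m\in H^2(M;\Z)$ dual to the characteristic submanifolds $M_i$, it suffices to compute $(\psi^k)^*v_i$. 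Now $\psi^k$ preserves each $M_i$ and restricts near it to the $k$-th power map in the normal circle direction (the $i$-th coordinate circle), so the classifying map of the line bundle $(\psi^k)^*\nu_i$ is the composite of that of $\nu_i$ with the $k$-th power map of $\C P^\infty$; hence $(\psi^k)^*v_i=(\psi^k)^*e(\nu_i)=e(\nu_i^{\otimes k})=k\,v_i$, and therefore $(\psi^k)^*$ acts on $H^{2j}(M;\Z)$ as multiplication by $k^j$ (dually, so does $(\psi^k)_*$ on $H_{2j}(M;\Z)$).

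Next, fix an odd prime $p$ (the case $p=2$ being vacuous), choose a primitive root $g$ modulo $p$, and put $f=\Sigma\psi^g\colon \Sigma M_{(p)}\to\Sigma M_{(p)}$. By naturality of the pinch map $f$ is a co-$H$ map, hence so is each power $f^{\circ m}$, so precomposition with $f^{\circ m}$ is additive on $[\Sigma M_{(p)},\Sigma M_{(p)}]$; combined with the (automatic) additivity of postcomposition this shows that the $\Z_{(p)}$-submodule $R$ generated by the powers $f^{\circ m}$ ($m\geq 0$) is a commutative ring under composition, with a ring homomorphism $R\to\mathrm{End}_{\Z_{(p)}}(\widetilde H_*(\Sigma M_{(p)};\Z_{(p)}))$ sending $f$ to the diagonal operator with eigenvalue $g^j$ on $\widetilde H_{2j+1}=H_{2j}(M)_{(p)}$. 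Writing $\dim M=2n$ and $P(x)=\prod_{j=0}^{n}(x-g^j)$, group the factors by the residue of $j$ modulo $p-1$: $P(x)=\prod_{i=1}^{p-1}P_i(x)$ with $P_i(x)=\prod_{j\equiv i}(x-g^j)$. For $i\neq i'$ the resultant of $P_i$ and $P_{i'}$ is a product of differences $g^j-g^{j'}$ with $j\not\equiv j'\pmod{p-1}$, hence a unit in $\Z_{(p)}$, so the $P_i$ are pairwise coprime in $\Z_{(p)}[x]$. The Chinese remainder theorem then produces $E_1,\dots,E_{p-1}\in\Z_{(p)}[x]$ with $E_iE_{i'}\equiv\delta_{ii'}E_i$ and $\sum_iE_i\equiv 1$ modulo $P$, and with $E_i(g^j)$ equal to $1$ or $0$ according as $j\equiv i$ or $j\not\equiv i$ modulo $p-1$. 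Setting $e_i=E_i(f)\in R$, the image of $e_i$ in $\mathrm{End}_{\Z_{(p)}}(\widetilde H_*)$ is the projection onto $\bigoplus_{j\equiv i}\widetilde H_{2j+1}$, so these images form a complete system of orthogonal idempotents.

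Finally I would upgrade $\{e_i\}$ to genuine orthogonal homotopy idempotents and split off the wedge summands. Here the homotopy-theoretic input is that a self-map of a simply connected finite complex which is trivial on homology is nilpotent; applied to $P(f)$ this shows that the kernel of $R\to\mathrm{End}_{\Z_{(p)}}(\widetilde H_*)$ is a nilpotent ideal of the commutative ring $R$, along which one lifts the orthogonal system $\{e_i\}$ to a complete system of orthogonal homotopy idempotents $\{\epsilon_i\}$ on $\Sigma M_{(p)}$ with $(\epsilon_i)_*$ still the projection onto $\bigoplus_{j\equiv i}\widetilde H_{2j+1}$. Splitting these idempotents in the homotopy category of $p$-local, simply connected, finite-type complexes (say via the mapping telescopes of the $\epsilon_i$) yields $\Sigma M_{(p)}\simeq X_1\vee\cdots\vee X_{p-1}$ with $\widetilde H_*(X_i;\Z)=\mathrm{im}\,(\epsilon_i)_*$, which is nonzero only in degrees $2j+1$ with $j\equiv i\pmod{p-1}$, that is only when $*\equiv 2i+1\pmod{2(p-1)}$. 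The main obstacle is precisely this last step: establishing the nilpotency of the kernel (a skeletal induction using that $\Sigma M_{(p)}$ is $2$-connected with finitely many cells, all in odd dimensions) and performing the idempotent splitting honestly in the homotopy category. A secondary delicate point is the rigorous verification of $(\psi^k)^*v_i=k\,v_i$, for which one must pin down how $\psi^k$ acts on the normal data of the characteristic submanifolds.
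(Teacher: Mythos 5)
Your overall strategy---construct power maps on $M$, then split $\Sigma M_{(p)}$ into ``eigenspaces'' of the induced self-map---is the same as the paper's, but both halves are executed differently, and the comparison is instructive. For the power map, the paper works on the moment-angle complex: the degree-$u$ self-map of $S^1$ induces a self-map of $\ZZ_{K(P)}$ that intertwines the torus actions and hence descends to $M$; the effect on $H^2(M;\Z)$ is then read off from naturality of the transgression of $T^{m-n}\to\ZZ_{K(P)}\to M$ (an isomorphism onto $H^2$ since $\ZZ_{K(P)}$ is $2$-connected), and the effect on $H^{2k}$ follows because $H^*(M;\Z)$ is generated in degree $2$. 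This completely avoids the normal-bundle computation $(\psi^k)^*v_i=kv_i$ that you rightly flag as delicate; your construction via $(P\times T^n)/\sim$ is fine, but you should verify the cohomology action this softer way. For the splitting, the paper follows Mimura--Nishida--Toda: it never lifts idempotents. It takes the non-idempotent maps $\alpha_i=\prod_{j\ne i}(\Sigma\varphi-u^j)$, defines $X_i$ as the mapping telescope of $\alpha_i$, computes $\widetilde H_*(X_i;\Z/p)$ as a direct limit (the $i$-th eigenspace survives, the others die), and checks that $\Sigma M_{(p)}\to X_1\vee\cdots\vee X_{p-1}$ is a mod $p$ homology isomorphism, hence an equivalence by Whitehead. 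Your Chinese-remainder construction of honest homotopy idempotents is the main divergence, and it is where your only real gap sits: the nilpotency of the kernel of $R\to\mathrm{End}(\widetilde H_*)$, equivalently that a self-map of the simply connected finite complex $\Sigma M_{(p)}$ inducing zero on homology is nilpotent. That statement is true (provable by induction on a homology decomposition), but you leave it as an acknowledged obstacle, and it buys you nothing here: once you pass to mapping telescopes you are doing exactly the MNT argument, for which exact idempotency is irrelevant. I would recommend dropping the idempotent lifting and running the telescope argument directly on $E_i(f)$ or on $\alpha_i$; with that change (and the transgression argument for the power map) your proof closes up and matches the paper's.
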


As a corollary we get a kind of rigidity of quasitoric manifolds over the same polytope, which also follows from a more general result Proposition \ref{split-even}.

\begin{corollary}
\label{main-rigid}
Let $M,N$ be quasitoric manifolds over the same simple $n$-polytope. For $p>n$ there is a homotopy equivalence
$$\Sigma M_{(p)}\simeq\Sigma N_{(p)}.$$
\end{corollary}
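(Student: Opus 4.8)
The plan is to sharpen Theorem~\ref{main-split} under the hypothesis $p>n$ by identifying each wedge summand explicitly. First I would recall that a quasitoric manifold over $P$ is a simply connected closed $2n$-manifold whose integral homology is free and concentrated in the even degrees $0,2,\dots,2n$, with $\operatorname{rank}H_{2i}(M;\Z)=h_i$, the $i$-th entry of the $h$-vector of $P$ \cite{DJ}; in particular $H_*(M;\Z)$ and $H_*(N;\Z)$ are isomorphic as graded abelian groups. It follows that $\widetilde H_*(\Sigma M_{(p)};\Z_{(p)})$ is a free $\Z_{(p)}$-module concentrated in the odd degrees $3,5,\dots,2n+1$, the part in degree $2i+1$ having rank $h_i$ for $1\le i\le n$.

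Writing $\Sigma M_{(p)}\simeq X_1\vee\dots\vee X_{p-1}$ as in Theorem~\ref{main-split}, I would note that each $X_i$ is a wedge summand, hence a retract, of $\Sigma M_{(p)}$, so $\widetilde H_*(X_i;\Z_{(p)})$ is a free $\Z_{(p)}$-module that vanishes outside the degrees $3,\dots,2n+1$ and, by Theorem~\ref{main-split}, outside the residue class of $2i+1$ modulo $2(p-1)$. The key numerical point is that $p>n$ gives $2(p-1)>2n-2=(2n+1)-3$, so the interval $[3,2n+1]$ contains at most one integer in any fixed residue class modulo $2(p-1)$: that integer is $2i+1$ when $1\le i\le n$, and there is none when $n<i\le p-1$. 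Hence $\widetilde H_*(X_i;\Z_{(p)})$ is free and concentrated in the single degree $2i+1\ge 3$ for $1\le i\le n$, while $X_i$ is homologically trivial for $n<i\le p-1$.

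Since a retract of the simply connected space $\Sigma M_{(p)}$ is simply connected, each $X_i$ is a simply connected $p$-local space whose reduced $\Z_{(p)}$-homology is free, concentrated in the single degree $2i+1$ for $1\le i\le n$ and trivial otherwise. Choosing (via the Hurewicz isomorphism) a map from a wedge of $h_i$ copies of $S^{2i+1}_{(p)}$ realizing a basis of $\widetilde H_{2i+1}(X_i;\Z_{(p)})$ and invoking the Whitehead theorem, I would conclude $X_i\simeq\bigl(S^{2i+1}_{(p)}\bigr)^{\vee h_i}$ for $1\le i\le n$ and $X_i\simeq\ast$ otherwise, so that
$$\Sigma M_{(p)}\ \simeq\ \bigvee_{i=1}^{n}\bigl(S^{2i+1}_{(p)}\bigr)^{\vee h_i}.$$
The right-hand side depends only on the polytope $P$; running the same argument for $N$ yields the identical wedge, and therefore $\Sigma M_{(p)}\simeq\Sigma N_{(p)}$.

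The one step with real content is the reduction of the abstract summands of Theorem~\ref{main-split} to explicit wedges of spheres, and it is precisely here that $p>n$ enters: it forces every residue class modulo $2(p-1)$ to meet the homological degrees $3,5,\dots,2n+1$ of $\Sigma M$ at most once, so that each $X_i$ is a Moore space of a free $\Z_{(p)}$-module in a single degree. The remaining inputs — simple connectivity of the summands and the dependence of the Betti numbers of a quasitoric manifold on the polytope alone — are routine.
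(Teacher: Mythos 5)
Your proposal is correct and follows essentially the same route as the paper: apply Theorem~\ref{main-split}, observe that $p>n$ forces each $\widetilde H_*(X_i)$ to be concentrated in the single degree $2i+1$, identify each $X_i$ with a wedge of $h_i(P)$ copies of $S^{2i+1}_{(p)}$ via Hurewicz--Whitehead, and note the result depends only on $P$. You merely spell out the modular-arithmetic step that the paper leaves implicit.
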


To a simplicial complex $K$ we can assign a space $\ZZ_K$ which is called the moment-angle complex for $K$ (see \cite{DJ,BP}). The fundamental construction involving quasitoric manifolds is that every quasitoric manifold over a simple polytope $P$ is obtained by the quotient of a certain free torus action on the moment-angle complex $\ZZ_{K(P)}$, where $K(P)$ denotes the boundary of the dual simplicial polytope of $P$. Then for a quasitoric manifold $M$ over $P$ the projection $\pi\colon\ZZ_{K(P)}\to M$ is of particular importance. We investigate the $p$-localization of the suspension of this projection through the $p$-local stable splitting of Theorem \ref{main-split}. Let $K$ be a simplicial complex on the vertex set $V$. Recall from \cite{BBCG} that there is a homotopy equivalence
\begin{equation}
\label{BBCG}
\Sigma\ZZ_K\simeq\bigvee_{\emptyset\ne I\subset V}\Sigma^{|I|+2}|K_I|
\end{equation}
where $K_I$ denotes the full subcomplex of $K$ on the vertex set $I\subset V$, i.e. $K_I=\{\sigma\in K\,\vert\,\sigma\subset I\}$, and $|K_I|$ means the geometric realization of $K_I$. We identify the map $\Sigma\pi_{(p)}\colon\Sigma(\ZZ_{K(P)})_{(p)}\to\Sigma M_{(p)}$ through the homotopy equivalences of Theorem \ref{main-split} and \eqref{BBCG}. Note that if $P$ has $m$ facets, then the vertex set of $K(P)$ is $[m]:=\{1,\ldots,m\}$.

\begin{theorem}
\label{main-projection}
Let $M$ be a quasitoric manifold over a simple polytope $P$ with $m$ facets. Then through the homotopy equivalences of Theorem \ref{main-split} and \eqref{BBCG}, the map $\Sigma\pi_{(p)}\colon\Sigma(\ZZ_{K(P)})_{(p)}\to\Sigma M_{(p)}$ is identified with a wedge of maps
$$\bigvee_{\substack{\emptyset\ne I\subset [m]\\|I|\equiv i\mod p-1}}(\Sigma^{|I|+2}|K(P)_I|)_{(p)}\to X_i.$$
for $i=1,\ldots,p-1$.
\end{theorem}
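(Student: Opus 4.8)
The plan is to exhibit both homotopy decompositions as ``eigenspace splittings'' for compatible self-maps --- the power maps --- and then to observe that $\Sigma\pi_{(p)}$ intertwines those self-maps, which forces it to be block diagonal with respect to the two splittings. Fix a primitive root $k$ modulo $p$, and recall (from the proof of Theorem \ref{main-split}) that the decomposition $\Sigma M_{(p)}\simeq X_1\vee\cdots\vee X_{p-1}$ is cut out by the power map $\psi^k_M\colon M_{(p)}\to M_{(p)}$: it acts on $\widetilde H^{2j+1}(\Sigma M_{(p)})$ as multiplication by $k^j$, and $X_i$ is the image of the idempotent $e_i^M=\frac1{p-1}\sum_{l=0}^{p-2}k^{-il}(\Sigma\psi^k_M)^{\circ l}$. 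Here the $e_i^M$ are orthogonal idempotents with $\sum_i e_i^M\simeq\mathrm{id}$; this makes sense because $\Sigma M_{(p)}$ is a suspension, so the relevant sets of homotopy classes of self-maps are groups and composition behaves bilinearly, and idempotents split in the category of $p$-local spaces of finite type.

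First I would set up the analogous structure on $\ZZ_K$. Writing $\ZZ_K=(D^2,S^1)^K$ as a polyhedral product, let $\psi^k_{\ZZ}\colon\ZZ_K\to\ZZ_K$ be the self-map induced by the pointed map $z\mapsto z^k$ on each pair $(D^2,S^1)$. The same coordinate-wise map $z\mapsto z^k$ is equivariant for the $T^m$-action twisted by the $k$-th power automorphism of $T^m$, hence descends along the free quotient $\ZZ_{K(P)}\to M$ by $T^{m-n}$, and the resulting self-map of $M$ agrees --- at least on cohomology, hence up to the data needed here --- with $\psi^k_M$. Consequently $\pi\circ\psi^k_{\ZZ}\simeq\psi^k_M\circ\pi$, and therefore $\Sigma\pi_{(p)}$ intertwines $\Sigma\psi^k_{\ZZ}$ and $\Sigma\psi^k_M$.

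Next I would run $\Sigma\psi^k_{\ZZ}$ through \eqref{BBCG}. Since the stable decomposition of a polyhedral product is natural in the pair variable (\cite{BBCG}), $\Sigma\psi^k_{\ZZ}$ respects the wedge \eqref{BBCG}, and on the summand $\Sigma^{|I|+2}|K(P)_I|$ it is induced by $z\mapsto z^k$ acting on the $|I|$ ``active'' coordinates only --- the combinatorial directions indexing $|K(P)_I|$ are untouched --- where on each active coordinate $z\mapsto z^k$ contributes degree $k$ (on $D^2/S^1\simeq S^2$). Hence the induced self-map of $\Sigma^{|I|+2}|K(P)_I|$ is multiplication by $k^{|I|}$. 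It follows that $e_i^{\ZZ}=\frac1{p-1}\sum_{l}k^{-il}(\Sigma\psi^k_{\ZZ})^{\circ l}$ acts as the identity on the summands with $|I|\equiv i\bmod p-1$ and as $0$ on all others, so its image is precisely $Y_i:=\bigvee_{\emptyset\ne I\subset[m],\,|I|\equiv i}(\Sigma^{|I|+2}|K(P)_I|)_{(p)}$, with $\sum_i e_i^{\ZZ}\simeq\mathrm{id}$. Now $e_i^{\ZZ}$ and $e_i^M$ are the same $\Z_{(p)}$-linear combination of iterates of the intertwined maps $\Sigma\psi^k_{\ZZ}$ and $\Sigma\psi^k_M$, so $\Sigma\pi_{(p)}\circ e_i^{\ZZ}\simeq e_i^M\circ\Sigma\pi_{(p)}$; restricting $\Sigma\pi_{(p)}$ to $Y_i\hookrightarrow\Sigma(\ZZ_{K(P)})_{(p)}$ and composing with the projection $\Sigma M_{(p)}\to X_j$ (given by $e_j^M$ onto its image) yields the original map when $j=i$ and the zero map when $j\ne i$. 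Since $\bigvee_iY_i\simeq\Sigma(\ZZ_{K(P)})_{(p)}$ and $\bigvee_iX_i\simeq\Sigma M_{(p)}$, this displays $\Sigma\pi_{(p)}$ as the asserted wedge of maps $Y_i\to X_i$.

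The main obstacle I expect is the middle step: producing $\psi^k_{\ZZ}$ and $\psi^k_M$ from one coordinate-wise power map so that $\pi$ genuinely intertwines them, and then pinning down the self-map induced on each BBCG summand $\Sigma^{|I|+2}|K(P)_I|$ to be $k^{|I|}\cdot\mathrm{id}$ at the level of maps (not merely on homology). This requires the functoriality of the polyhedral-product splitting in the pair $(D^2,S^1)$ together with a careful analysis of the self-map of $D^2/S^1\simeq S^2$ induced by $z\mapsto z^k$ and of how the $|I|$ copies assemble in the smash polyhedral product. A secondary point, which should be immediate from the proof of Theorem \ref{main-split}, is that the splitting there really is the one cut out by the idempotents $e_i^M$, so that the spaces $X_i$ in the two theorems coincide.
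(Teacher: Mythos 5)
Your proposal is correct and follows essentially the same route as the paper: the coordinate-wise power map $\underline{u}$ on $\ZZ_{K(P)}$ descends to the power map on $M$, $\Sigma\pi$ intertwines the two, and naturality of the splitting \eqref{BBCG} identifies the power map on the summand $\Sigma^{|I|+2}|K(P)_I|$ as the degree $u^{|I|}$ map, so the induced splitting of $\Sigma(\ZZ_{K(P)})_{(p)}$ is governed by the residue of $|I|$ modulo $p-1$ and $\Sigma\pi_{(p)}$ becomes block diagonal. The only cosmetic difference is that you extract the summands via the idempotents $\frac{1}{p-1}\sum_l u^{-il}(\Sigma\underline{u})^{\circ l}$, whereas the paper uses mapping telescopes of the composites $\beta_i=\prod_{j\ne i}(\Sigma\underline{u}-u^{j})$ exactly as in Mimura--Nishida--Toda; both devices are realized by homotopy colimits and yield the same summands $Y_i$ and $X_i$.
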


\begin{corollary}
\label{main-projection-trivial}
Let $M$ be a quasitoric manifold over a simple $n$-polytope $P$. For $p>n$, the map $\Sigma\pi_{(p)}\colon\Sigma(\ZZ_{K(P)})_{(p)}\to\Sigma M_{(p)}$ is null homotopic.
\end{corollary}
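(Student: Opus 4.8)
The plan is to feed Theorem \ref{main-projection} into a dimension count and to exploit how restrictive the hypothesis $p>n$ is on the target wedge summands $X_i$.

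I would first determine the $X_i$ completely. Since $M$ is a $2n$-dimensional quasitoric manifold it carries a CW structure with cells only in even dimensions, so $H_*(M;\Z)$ is free and $\widetilde H_*(\Sigma M;\Z)$ is free and supported in the odd degrees $3,5,\dots,2n+1$. For $p>n$ we have $2(p-1)\ge 2n$, and the interval $[3,2n+1]$ has length $2n-2<2(p-1)$, so it contains at most one degree congruent to $2i+1$ modulo $2(p-1)$; that degree is $2i+1$ itself, occurring only for $1\le i\le n$. Hence $X_i\simeq *$ for $n<i\le p-1$, while for $1\le i\le n$ the space $X_i$ is simply connected with free homology concentrated in degree $2i+1$, so $X_i\simeq\bigvee^{b_{2i}(M)}S^{2i+1}_{(p)}$. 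In particular $\Sigma M_{(p)}$ is a wedge of spheres when $p>n$.

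By Theorem \ref{main-projection}, $\Sigma\pi_{(p)}$ is a wedge of maps $f_i$ out of $\bigvee_{|I|\equiv i}(\Sigma^{|I|+2}|K(P)_I|)_{(p)}$, so it suffices to prove that each restriction $g_I\colon(\Sigma^{|I|+2}|K(P)_I|)_{(p)}\to X_i$ with $|I|\equiv i\bmod p-1$ is null homotopic. If $n<i\le p-1$ this is automatic, since $X_i\simeq *$. Assume now $1\le i\le n$, so $X_i\simeq\bigvee S^{2i+1}_{(p)}$, which is $2i$-connected. When $|I|=i$: the full subcomplex $K(P)_I$ of the simplicial $(n-1)$-sphere $K(P)$ sits on $i\le n$ vertices, so either $I$ is a face of $K(P)$ and $|K(P)_I|$ is a contractible simplex, or else the only $(i-1)$-simplex on the vertex set $I$ — namely $I$ itself — is absent and $\dim|K(P)_I|\le i-2$; in both cases $\Sigma^{i+2}|K(P)_I|$ has dimension at most $2i$ and so maps trivially into the $2i$-connected $X_i$.

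It remains to handle $|I|>i$, which I expect to be the heart of the matter. Then $|I|\ge i+(p-1)$, so $\Sigma^{|I|+2}|K(P)_I|$ is $(|I|+1)$-connected, while $\dim|K(P)_I|\le n-1$ forces a minimal CW model with cells only in dimensions $|I|+2,\dots,|I|+n+2$; hence the obstructions to nullhomotoping $g_I$ lie in $\pi_j(\bigvee S^{2i+1})_{(p)}$ for $|I|+2\le j\le|I|+n+2$. By the Hilton--Milnor theorem these groups are built from $\pi_j(S^{2ri+1})_{(p)}$ over the weights $r\ge1$ of basic products; the free summands never contribute because $j\ge|I|+2>2i+1$, and the point will be that $p>n$ pushes the whole range $[|I|+2,|I|+n+2]$ below the first $p$-torsion $\pi_{2ri+2p-2}(S^{2ri+1})$, so that these groups vanish $p$-locally and $g_I$ is null. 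The only real difficulty, then, is to control these higher obstruction groups uniformly in $I$, matching the dimension range of $\Sigma^{|I|+2}|K(P)_I|$ against the stable and metastable $p$-torsion of $\bigvee S^{2i+1}$; everything else is a formal consequence of Theorem \ref{main-projection} together with the identification of the $X_i$ above.
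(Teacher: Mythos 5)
Your identification of the $X_i$ for $p>n$ and your treatment of the summands with $|I|=i$ are correct, and in fact they coincide with the entirety of what the paper writes: either $I$ is a face of $K(P)$, so $K(P)_I$ is a contractible simplex, or $\dim K(P)_I\le |I|-2$, so $\Sigma^{|I|+2}|K(P)_I|$ has dimension at most $2i$ and maps trivially into the $2i$-connected wedge $\bigvee S^{2i+1}_{(p)}$. The problem is the case $|I|>i$, i.e.\ $|I|\ge i+(p-1)$, which you rightly call ``the heart of the matter'' but then do not prove; the phrases ``I expect'' and ``the point will be'' are carrying all the weight. (The paper's own proof is silent on this case too: it only runs over $|I|=i$.) Moreover your proposed mechanism for closing it fails on both counts. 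First, the Hilton--Milnor decomposition of $\pi_j(\bigvee S^{2i+1})_{(p)}$ contains a copy of $\pi_j(S^{2ri+1})$ for every basic product of weight $r\ge 1$, so free summands occur in all degrees $2ri+1$, not only $2i+1$, and for $r\ge 2$ these degrees can land in the cell range $[\,|I|+2,\,|I|+n+1\,]$ of the source: e.g.\ $n=4$, $p=5$, $i=2$, $|I|=6$, where the Whitehead products $[\iota_a,\iota_b]$ contribute a free summand to $\pi_9$ and the obstruction group is $H^1(|K(P)_I|;\Z_{(5)})$, which need not vanish. Second, the cell range is \emph{not} always below the first $p$-torsion: for $|I|=i+(p-1)$ the top cell of the source can sit in dimension $i+(p-1)+n+1$, which equals $2i+2p-2=\deg\alpha_1$ when $p=n+1$ and $i=1$.

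This last failure is not an artifact of the method. Take $M=\C P^2$ over $\Delta^2$ and $p=3>n=2$: the only non-contractible summand of \eqref{BBCG} is $I=[3]$ with $|I|=3\equiv 1\bmod 2$, giving a map $S^6\to X_1=S^3_{(3)}$, and this class is $\alpha_1\in\pi_6(S^3)_{(3)}=\Z/3$, which is nonzero because the cofiber of $\Sigma\pi$ is $\Sigma\C P^3$ and $P^1\colon H^3(\Sigma\C P^3;\Z/3)\to H^7(\Sigma\C P^3;\Z/3)$ is an isomorphism, so the top cell cannot split off. Thus the case you defer is exactly where the statement as printed breaks down at $p=n+1$ (consistently with Lemma \ref{CP^k}), and it cannot be repaired by any obstruction count; one needs at least $p\ge n+2$, and even then the summands with $|I|\ge i+2(p-1)$ still have nonvanishing obstruction groups and would require an argument using the specific structure of $\pi$ rather than connectivity and dimension alone. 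In short, the part of your argument that works is precisely the part the paper proves, and the part you postpone is a genuine gap.
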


We also discuss necessity of suspension and localization for triviality of the projection $\pi\colon\ZZ_{K(P)}\to M$ in Corollary \ref{main-projection-trivial}. Consider the complex projective space $\C P^1$ as a quasitoric manifold. Then the projection $\pi$ is the Hopf map $S^3\to\C P^1$, so neither $\Sigma^\infty\pi$ nor $\pi_{(p)}$ for any $p$ are null homotopic. We will discuss this problem for more general quasitoric manifolds.

The authors are grateful to Kouyemon Iriye and Shuichi Tsukuda for useful comments.

\section{Cohomology of quasitoric manifolds}

This section collects basic properties of the cohomology of quasitoric manifolds which will be used later. Let $P$ be a simple $n$-polytope, and let $M$ be a quasitoric manifold over $P$. Put $f_i(P)$ to be the number of $(n-i-1)$-dimensional faces of $P$ for $i=-1,0,\ldots,n-1$. The $h$-vector of $P$ is defined by $(h_0(P),\ldots,h_n(P))$ such that for $k=0,\ldots,n$,
$$h_k(P)=\sum_{i=0}^k(-1)^{k-i}\binom{n-i}{n-k}f_{i-1}(P).$$
It is known that the module structure of the cohomology of $M$ is described by the $h$-vector of $P$, implying that the module structure depends only on $P$.

\begin{proposition}
[Davis and Januszkiewicz {\cite[Theorem 3.1]{DJ}} (cf. \cite{BP})]
\label{h-vector}
Let $M$ be a quasitoric manifold over $P$. Then we have 
$$H^{\rm odd}(M;\Z)=0\quad\text{and}\quad H^{2i}(M;\Z)\cong\Z^{h_i(P)}.$$
\end{proposition}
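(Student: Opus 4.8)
The plan is to exhibit a perfect Morse function on $M$ whose critical points are the fixed points of the $T^n$-action, and then to match its Morse numbers with the $h$-vector by a face-counting argument. First I would pick a linear functional $\lambda\colon\R^n\to\R$ that is generic with respect to $P$, so that $\lambda$ is non-constant on every edge of $P$, and set $f=\lambda\circ\varrho\colon M\to\R$, where $\varrho\colon M\to P$ is the orbit projection. The quasitoric structure provides, near each fixed point $x_v$ lying over a vertex $v$ of $P$, a $T^n$-equivariant chart in which $M$ looks like $\C^n$ with the coordinatewise rotation action and $\varrho$ is $(z_1,\dots,z_n)\mapsto(|z_1|^2,\dots,|z_n|^2)$ onto the corner; hence in this chart $f$ becomes the non-degenerate quadratic form $\sum_k c_k|z_k|^2$ with all $c_k\ne 0$. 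Over the relative interior of a positive-dimensional face the orbit map is a submersion and the linear function $\lambda$ has no critical point, so the critical set of $f$ is exactly the set of fixed points $x_v$ indexed by the vertices $v$ of $P$, and $f$ is Morse.

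Second, I would read off the Morse index of $f$ at $x_v$ from the local model: since $|z_k|^2=x_k^2+y_k^2$ contributes two negative directions precisely when $c_k<0$, i.e.\ when $\lambda$ decreases along the $k$-th edge of $P$ at $v$, the index at $x_v$ equals $2\,\mathrm{ind}(v)$, where $\mathrm{ind}(v)$ is the number of edges of $P$ at $v$ along which $\lambda$ decreases. In particular every index is even, so by Morse theory $M$ admits a CW structure with exactly $n_i:=\#\{v\colon \mathrm{ind}(v)=i\}$ cells in dimension $2i$ and no odd-dimensional cells. The cellular chain complex then has vanishing differentials, which already gives $H^{\mathrm{odd}}(M;\Z)=0$ and $H^{2i}(M;\Z)\cong\Z^{n_i}$.

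Third, I would identify $n_i$ with $h_i(P)$ purely combinatorially. As $P$ is simple, the faces of $P$ containing a vertex $v$ biject with the subsets of the $n$ edges of $P$ at $v$, and $v$ is the $\lambda$-maximal vertex of such a face exactly when the chosen edges all descend from $v$; since every face of $P$ has a unique $\lambda$-maximal vertex, counting faces of each dimension through their top vertex yields
\[ f_{n-j-1}(P)=\sum_{v}\binom{\mathrm{ind}(v)}{j}=\sum_{i}\binom{i}{j}\,n_i\qquad(0\le j\le n), \]
the sums over $v$ ranging over the vertices of $P$. This system is unitriangular in the $n_i$, and solving it returns exactly the alternating-sum formula that defines $h_k(P)$ in terms of the $f_i(P)$; hence $n_i=h_i(P)$, completing the proof.

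The step I expect to be the main obstacle is the first one: checking carefully that $f$ is Morse with critical locus equal to the fixed-point set and with the asserted even indices. This rests on the local equivariant normal form of a quasitoric manifold at a fixed point together with a genericity argument for $\lambda$ (choosing $\lambda$ in the complement of finitely many hyperplanes so that no edge of $P$ is level). Should one prefer to avoid Morse theory altogether, the same cell counts can instead be obtained by building the CW structure of $M$ directly from the filtration of $P$ by $\lambda$ as in \cite{DJ}, at the cost of more combinatorial bookkeeping but with no essentially new difficulty.
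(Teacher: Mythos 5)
The paper states this result without proof, citing \cite[Theorem 3.1]{DJ}, and your argument is essentially the standard proof given there: a generic linear functional on $P$ yields a perfect even-dimensional cell structure on $M$ indexed by the vertices of $P$, and the face-counting identity $f_{n-j-1}(P)=\sum_v\binom{\mathrm{ind}(v)}{j}$ inverts to identify the cell counts with the $h$-vector. Your outline is correct; the only caveat is the one you already flag, namely that $\lambda\circ\varrho$ need not literally be smooth since $M/T^n\cong P$ only as manifolds with corners, which is why \cite{DJ} builds the cell structure from the filtration of $P$ rather than invoking Morse theory verbatim.
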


Let $K$ be a simplicial complex on the vertex set $[m]$. The moment-angle complex $\ZZ_K$ is defined by
$$\ZZ_K:=\bigcup_{\sigma\in K}D(\sigma)\quad(\subset(D^2)^m)$$
where $D(\sigma)=\{(x_1,\ldots,x_m)\in(D^2)^m\,\vert\,|x_i|=1\text{ whenever }i\not\in\sigma\}$ and $D^2$ is regarded as the unit disk of $\C$. Then the canonical action of $T^m$ on $(D^2)^m$ restricts to the action of $T^m$ on $\ZZ_K$. Let $M$ be a quasitoric manifold over a simple $n$-polytope $P$ with $m$ facets. Then we may regard the vertex set of $K(P)$ is $[m]$. As in \cite{DJ,BP}, $M$ is obtained by quotienting out the moment-angle complex $\ZZ_{K(P)}$ by a certain free $T^{m-n}$-action which is the restriction of the canonical $T^m$-action. Then there is a homotopy fibration 
\begin{equation}
\label{fibration}
\ZZ_{K(P)}\xrightarrow{\pi}M\xrightarrow{\alpha}BT^{m-n}.
\end{equation}
One easily sees that $\ZZ_{K(P)}$ is 2-connected (cf. \cite{BP}), hence the transgression $H^1(T^{m-n};\Z)\to H^2(M;\Z)$ associated with the fibration $T^{m-n}\to\ZZ_{K(P)}\xrightarrow{\pi}M$ is an isomorphism. In particular the induced map $\alpha^*\colon H^2(BT^{m-n};\Z)\to H^2(M;\Z)$ is an isomorphism. It is also known as in \cite[Theorem 4.14]{DJ} (cf. \cite{BP}) that the cohomology ring $H^*(M;\Z)$ is generated by 2-dimensional elements. We record these properties of the cohomology of $M$.

\begin{proposition}
\label{H^2}
Let $M$ be a quasitoric manifold over a simple $n$-polytope $P$ with $m$ facets. 
\begin{enumerate}
\item The transgression $H^1(T^{m-n};\Z)\to H^2(M;\Z)$ associated with the fibration $T^{m-n}\to\ZZ_{K(P)}\xrightarrow{\pi}M$ is an isomorphism.
\item The map $\alpha^*\colon H^2(BT^{m-n};\Z)\to H^2(M;\Z)$ is an isomorphism.
\item The cohomology ring $H^*(M;\Z)$ is generated by $H^2(M;\Z)$.
\end{enumerate}
\end{proposition}

\section{Proofs of the main results}

Let $P$ be a simple $n$-polytope with $m$ facets, and let $M$ be a quasitoric manifold over $P$. We construct power maps of $M$. Let $u$ be an integer. By the definition of moment-angle complexes, the degree $u$ self-map of $S^1$ induces a self-map $\underline{u}\colon\ZZ_{K(P)}\to\ZZ_{K(P)}$. 

\begin{lemma}
\label{power-map}
There is a self-map $\underline{u}\colon M\to M$ satisfying 
$$\underline{u}^*=u^k\colon H^{2k}(M;\Z)\to H^{2k}(M;\Z).$$
\end{lemma}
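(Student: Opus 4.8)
The plan is to construct the self-map $\underline{u}\colon M\to M$ by showing that the torus self-map $\underline{u}\colon\ZZ_{K(P)}\to\ZZ_{K(P)}$ descends to the quotient, using the fibration \eqref{fibration}, and then to compute its effect on cohomology using the fact that $H^*(M;\Z)$ is generated in degree $2$ (Proposition \ref{H^2}(3)).

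First I would recall that $M=\ZZ_{K(P)}/T^{m-n}$ where the free $T^{m-n}$-action is the restriction of the canonical $T^m$-action. Write the subtorus as $T^{m-n}=\{\,(t^{w_1},\ldots,t^{w_m})\,\}$ for an appropriate integer matrix, or more invariantly as the image of an injection $T^{m-n}\hookrightarrow T^m$. The map $\underline{u}\colon (D^2)^m\to(D^2)^m$ given by $(x_1,\ldots,x_m)\mapsto(x_1^u,\ldots,x_m^u)$ (which restricts to $\ZZ_{K(P)}$, since $D(\sigma)$ is preserved) satisfies $\underline{u}(t\cdot x)=t^u\cdot\underline{u}(x)$ for $t\in T^m$; in particular it is equivariant with respect to the $u$-th power endomorphism of $T^m$, which carries the subtorus $T^{m-n}$ into itself. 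Therefore $\underline{u}$ descends to a well-defined self-map $\underline{u}\colon M\to M$ fitting into a commuting ladder over the fibration $\ZZ_{K(P)}\xrightarrow{\pi}M\xrightarrow{\alpha}BT^{m-n}$, where the induced self-map of $BT^{m-n}$ is $Bu$, the classifying map of the $u$-th power endomorphism, which acts by multiplication by $u$ on $H^2(BT^{m-n};\Z)$.

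Next I would compute $\underline{u}^*$ on $H^2(M;\Z)$. By Proposition \ref{H^2}(2) the map $\alpha^*\colon H^2(BT^{m-n};\Z)\to H^2(M;\Z)$ is an isomorphism, and by naturality $\underline{u}^*\circ\alpha^*=\alpha^*\circ(Bu)^*=\alpha^*\circ(\times u)$, so $\underline{u}^*=u\cdot\mathrm{id}$ on $H^2(M;\Z)$. (Alternatively one can argue via the fibration $T^{m-n}\to\ZZ_{K(P)}\to M$ using Proposition \ref{H^2}(1), since $\underline{u}$ is $u$-equivariant and thus acts by $u$ on $H^1(T^{m-n};\Z)$, which transgresses isomorphically onto $H^2(M;\Z)$.) Finally, since $H^*(M;\Z)$ is generated as a ring by $H^2(M;\Z)$ (Proposition \ref{H^2}(3)), any class in $H^{2k}(M;\Z)$ is a sum of products of $k$ classes of degree $2$; applying the ring endomorphism $\underline{u}^*$ multiplies each such product by $u^k$, hence $\underline{u}^*=u^k$ on $H^{2k}(M;\Z)$, as claimed. (Odd cohomology vanishes by Proposition \ref{h-vector}, so there is nothing to check there.)

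The only genuine point requiring care — the main obstacle — is verifying that $\underline{u}$ genuinely descends to the quotient, i.e. that the $u$-th power endomorphism of $T^m$ preserves the particular subtorus $T^{m-n}$ defining $M$; this is immediate because any group endomorphism $t\mapsto t^u$ of $T^m$ preserves every subtorus (it is given by an integer scalar matrix, which commutes with everything), so $\underline{u}$ is equivariant along a self-map of the pair $(T^m,T^{m-n})$ and the quotient map is well defined. Everything else is formal naturality plus the ring-generation statement already recorded in Section~2.
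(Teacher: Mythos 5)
Your proposal is correct and follows essentially the same route as the paper: descend the coordinatewise $u$-th power map on $\ZZ_{K(P)}$ to $M$ via its equivariance with respect to the $u$-th power endomorphism of the torus, identify the action on $H^2(M;\Z)$ as multiplication by $u$ using Proposition \ref{H^2} (the paper uses the transgression of part (1), which you also note as an alternative), and conclude by the degree-$2$ generation of part (3). Your explicit remark that $\underline{u}$ is equivariant along the power endomorphism (rather than literally commuting with the action) and that this endomorphism preserves every subtorus is a slightly more careful phrasing of the same step.
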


\begin{proof}
Since the self-map $\underline{u}\colon\ZZ_{K(P)}\to\ZZ_{K(P)}$ commutes with the canonical $T^m$-action, it induces a self-map $\underline{u}\colon M\to M$ since $M$ is the quotient of the restriction of the canonical $T^m$-action to a certain subtorus, and by construction it satisfies the commutative diagram
$$\xymatrix{T^{m-n}\ar[d]^{\underline{u}}\ar[r]&\ZZ_{K(P)}\ar[r]^\pi\ar[d]^{\underline{u}}&M\ar[d]^{\underline{u}}\\
T^{m-n}\ar[r]&\ZZ_{K(P)}\ar[r]^\pi&M}$$
where $\underline{u}\colon T^{m-n}\to T^{m-n}$ is the product of the degree $u$ map of $S^1$. Then by Proposition \ref{H^2} and naturality of transgression, we see that the self-map $\underline{u}\colon M\to M$ has the desired property.
\end{proof}

We now recall the result of \cite{MNT}, here we reproduce the proof in order to clarify naturality. Let $X$ be a CW-complex of finite type connected satisfying
\begin{enumerate}
\item $H_{\rm odd}(X;\Z)=0$ and $H_{\rm even}(X;\Z)$ is free, and 
\item there is a self-map $\varphi\colon X\to X$ satisfying $\varphi_*=u^k\colon H_{2k}(X;\Z)\to H_{2k}(X;\Z)$ for any $k\ge 0$, where $u$ is an integer whose modulo $p$ reduction is the primitive $(p-1)^\text{\rm th}$ root of unity of $\Z/p$.
\end{enumerate}
Define a self map $\alpha_i\colon\Sigma X\to\Sigma X$ by $\alpha_i:=(\Sigma\varphi-u^1)\circ\cdots\circ\widehat{(\Sigma\varphi-u^i)}\circ\cdots\circ(\Sigma\varphi-u^{p-1})$ for $i=1,\ldots,p-1$. Then $(\alpha_i)_*\colon\widetilde{H}_{2k+1}(\Sigma X;\Z/p)\to\widetilde{H}_{2k+1}(\Sigma X;\Z/p)$ is trivial for $k\not\equiv i\mod p-1$ and is the isomorphism for $k\equiv i\mod p-1$. Put 
$$X_i=\mathrm{hocolim}\{\Sigma X_{(p)}\xrightarrow{\alpha_i}\Sigma X_{(p)}\xrightarrow{\alpha_i}\Sigma X_{(p)}\xrightarrow{\alpha_i}\cdots\}.$$
Then it is easy to check that $X_i$ is $p$-locally of finite type and 
$$\widetilde{H}_{2k+1}(X_i;\Z/p)=\begin{cases}\widetilde{H}_{2k+1}(\Sigma X;\Z/p)&k\equiv i\mod p-1\\0&k\not\equiv i\mod p-1\end{cases}$$
such that the canonical map $\Sigma X_{(p)}\to X_i$ induces the projection in mod $p$ homology. Then the composite $\Sigma X_{(p)}\to\Sigma X_{(p)}\vee\cdots\vee\Sigma X_{(p)}\to X_1\vee\cdots\vee X_{p-1}$ is an isomorphism in mod $p$ homology, hence an isomorphism in homology with coefficient $\Z_{(p)}$ since spaces on both sides are $p$-locally of finite type, where the first arrow in the composite is defined by using the suspension comultiuplication. Therefore by the J.H.C. Whitehead theorem we obtain:

\begin{lemma}
[Mimura, Nishida, and Toda \cite{MNT}]
\label{MNT}
Let $X$ and $X_i$ be as above. There is a homotopy equivalence
$$\Sigma X_{(p)}\simeq X_1\vee\cdots\vee X_{p-1}$$
such that $\widetilde{H}_*(X_i;\Z/p)=0$ unless $*\equiv 2i+1\mod 2(p-1)$ for $i=1,\ldots,p-1$.
\end{lemma}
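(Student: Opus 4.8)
My plan is to run the Mimura--Nishida--Toda argument already sketched above, taking $X$, the given self-map $\varphi$, the maps $\alpha_i$, and the spaces $X_i$ as set up. The whole statement reduces to one homology computation followed by a standard localization argument, so I would present it in that order.

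First I would compute $(\alpha_i)_*$ in mod $p$ homology, which is the crux. Since $\widetilde H_{2k+1}(\Sigma X;R)\cong\widetilde H_{2k}(X;R)$ and $\Sigma\varphi$ acts on the latter by $u^k$, the self-map $\Sigma\varphi-u^j$ acts on $\widetilde H_{2k+1}(\Sigma X;R)$ by the scalar $u^k-u^j$, and hence $\alpha_i$ acts by $\prod_{j\ne i}(u^k-u^j)$. The hypothesis on $u$ says that modulo $p$ the elements $u^1,\dots,u^{p-1}$ are precisely the elements of $(\Z/p)^\times$; therefore $u^k-u^j\equiv0\bmod p$ exactly when $k\equiv j\bmod p-1$, so the scalar $\prod_{j\ne i}(u^k-u^j)$ vanishes mod $p$ unless $k\equiv i\bmod p-1$, in which case it is congruent mod $p$ to $\prod_{j\ne i}(u^i-u^j)$, the value at $u^i$ of the formal derivative of $x^{p-1}-1=\prod_{j=1}^{p-1}(x-u^j)$, namely $-u^{i(p-2)}$, which is a unit of $\Z/p$. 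Thus $(\alpha_i)_*$ is an isomorphism on $\widetilde H_{2k+1}(\Sigma X;\Z/p)$ when $k\equiv i\bmod p-1$ and is zero otherwise.

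Next I would pass to the telescopes. Mod $p$ homology commutes with the mapping telescope, so $\widetilde H_*(X_i;\Z/p)$ is the colimit of the maps $(\alpha_i)_*$, which by the previous step is $\widetilde H_{2k+1}(\Sigma X;\Z/p)$ in degrees $2k+1$ with $k\equiv i\bmod p-1$ and $0$ in all other degrees; in particular $X_i$ is $p$-locally of finite type and the structure map $\Sigma X_{(p)}\to X_i$ realizes the projection onto this part. Assembling the $X_i$ using the suspension comultiplication yields the composite $\Sigma X_{(p)}\to\bigvee_{p-1}\Sigma X_{(p)}\to X_1\vee\cdots\vee X_{p-1}$, which by the computation of $(\alpha_i)_*$ is an isomorphism on $\widetilde H_*(-;\Z/p)$; the grading assertion $\widetilde H_*(X_i;\Z/p)=0$ unless $*\equiv2i+1\bmod2(p-1)$ is then immediate.

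Finally I would upgrade the mod $p$ isomorphism to a homotopy equivalence. All spaces involved are simply connected (being suspensions, or telescopes of such) and $p$-locally of finite type, so a map between them inducing an isomorphism in mod $p$ homology also induces an isomorphism in $\Z_{(p)}$-homology, and the J.H.C. Whitehead theorem then finishes the proof. The step I expect to be the main obstacle is the homology computation of $(\alpha_i)_*$, since that is where the arithmetic hypothesis that $u$ reduces to a primitive $(p-1)^{\mathrm{th}}$ root of unity is genuinely used; the only other point requiring care is the passage from a mod $p$ homology isomorphism to a $p$-local equivalence, which relies essentially on the finite-type hypothesis on $X$.
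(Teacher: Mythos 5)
Your proposal is correct and follows essentially the same route as the paper: the same maps $\alpha_i$, the same mapping telescopes $X_i$, the same assembly via the suspension comultiplication, and the same finite-type plus Whitehead argument to upgrade the mod $p$ homology isomorphism to a $p$-local equivalence. The only difference is that you spell out the scalar computation $\prod_{j\ne i}(u^k-u^j)$ and its identification with the derivative of $x^{p-1}-1$ at $u^i$, a detail the paper merely asserts.
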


We now prove the main results.

\begin{proof}
[Proof of Theorem \ref{main-split}]
Combine Lemma \ref{power-map} and \ref{MNT}
\end{proof}

\begin{proof}
[Proof of Corollary \ref{main-rigid}]
Recall that $M$ is of dimension $2n$. Apply Theorem \ref{main-split} to $M$, then we get $\Sigma M_{(p)}\simeq X_1\vee\cdots\vee X_{p-1}$. If $p>n$, the space $X_i$ satisfies $\widetilde{H}_*(X_i;\Z/p)=0$ unless $*=2i+1$. Then since $X_i$ is simply connected, $X_i$ is a wedge of $S^{2i+1}_{(p)}$, where the number of spheres is the $2i$-dimensional Betti number of $M$ which is equal to $h_i(P)$ by Proposition \ref{h-vector}. So we obtain a homotopy equivalence $\Sigma M_{(p)}\simeq\bigvee_{i=1}^{p-1}\bigvee^{h_i(P)}S^{2i+1}_{(p)}$. We can get the same homotopy equivalence for $N$ as well, and therefore the proof is completed.
\end{proof}

\begin{proof}
[Proof of Theorem \ref{main-projection}]
Define a map $\beta_i\colon\Sigma \ZZ_{K(P)}\to\Sigma\ZZ_{K(P)}$ by $\beta_i=(\Sigma\underline{u}-u^1)\circ\cdots\circ\widehat{(\Sigma\underline{u}-u^i)}\circ\cdots\circ(\Sigma\underline{u}-u^{p-1})$ for $i=1,\ldots,p-1$, where $u$ is an integer whose modulo $p$ reduction is the primitive $(p-1)^\text{\rm th}$ root of unity of $\Z/p$. Put
$$Y_i=\mathrm{hocolim}\{\Sigma(\ZZ_{K(P)})_{(p)}\xrightarrow{\beta_i}\Sigma(\ZZ_{K(P)})_{(p)}\xrightarrow{\beta_i}\Sigma(\ZZ_{K(P)})_{(p)}\xrightarrow{\beta_i}\cdots\}.$$
By naturality of the homotopy equivalence \eqref{BBCG} with respect to self-maps of $S^1$, the self-map $\underline{u}\colon\Sigma\ZZ_{K(P)}\to\Sigma\ZZ_{K(P)}$ is identified with a wedge of the degree $u^{|I|}$ maps
$$u^{|I|}\colon\Sigma^{|I|+2}|K(P)_I|\to\Sigma^{|I|+2}|K(P)_I|$$
for $\emptyset\ne I\subset[m]$. Then we have $Y_i=\bigvee_{\substack{\emptyset\ne I\subset[m]\\|I|\equiv i\mod p-1}}\Sigma^{|I|+2}|K(P)_I|$ and the canonical map $\Sigma(\ZZ_{K(P)})_{(p)}\to Y_i$ is the projection similarly to the proof of Proposition \ref{MNT}. So the composite  $\Sigma(\ZZ_{K(P)})_{(p)}\to\Sigma(\ZZ_{K(P)})_{(p)}\vee\cdots\vee\Sigma(\ZZ_{K(P)})_{(p)}\to Y_1\vee\cdots\vee Y_{p-1}$ is a homotopy equivalence, where the first map is defined by the suspension comultiplication and the second map is a wedge of the canonical maps into the homotopy colimits. On the other hand, by Lemma \ref{power-map} there is a commutative diagram
$$\xymatrix{\Sigma\ZZ_{K(P)}\ar[r]^{\beta_i}\ar[d]^{\Sigma\pi}&\Sigma\ZZ_{K(P)}\ar[d]^{\Sigma\pi}\\
\Sigma M\ar[r]^{\alpha_i}&\Sigma M}$$
where $\alpha_i$ is as above. Then there are maps $\pi_i\colon Y_i\to X_i$ satisfying a commutative diagram
$$\xymatrix{\Sigma(\ZZ_{K(P)})_{(p)}\ar[d]^{\Sigma\pi_{(p)}}\ar[d]\ar[r]&Y_1\vee\cdots\vee Y_{p-1}\ar[d]^{\pi_1\vee\cdots\vee\pi_{p-1}}\\
\Sigma M_{(p)}\ar[r]&X_1\vee\cdots\vee X_{p-1}}$$
where the horizontal arrows are the prescribed homotopy equivalences. Thus the proof is completed.
\end{proof}

\begin{proof}
[Proof of Corollary \ref{main-projection-trivial}]
Since $p>n$, the map $\Sigma\pi_{(p)}\colon\Sigma(\ZZ_{K(P)})_{(p)}\to\Sigma M_{(p)}$ is identified with a wedge of the maps $\bigvee_{I\subset[m],\,|I|=i}(\Sigma^{|I|+2}|K(P)_I|)_{(p)}\to\bigvee S^{2i+1}_{(p)}$ for $i=1,\ldots,p-1$. If $\dim K(P)_I= |I|-1$, then $K(P)$ is a simplex, so $|K(P)_I|$ is contractible. Then $\bigvee_{I\subset[m],\,|I|=i}\Sigma^{|I|+1}|K(P)_I|$ is homotopy equivalent to a CW-complex of dimension at most $2i$, completing the proof. 
\end{proof}

We close this section by showing a general homotopy theoretical property of finite complexes consisting only of even cells from which Corollary \ref{main-rigid} also follows since there are cell decompositions of quasitoric manifolds only by even dimensional cells.

\begin{proposition}
\label{split-even}
Let $X$ be an $n$-dimensional connected finite complex consisting only of even cells. If $p>n$, then $\Sigma X_{(p)}$ is homotopy equivalent to a wedge of $p$-local odd spheres.
\end{proposition}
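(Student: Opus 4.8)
The plan is to replace $X$ by $Z:=\Sigma X$ and argue skeleton by skeleton. Fixing a CW structure on $X$ with a single $0$-cell and cells only in even dimensions, the reduced suspension $Z$ is a $2$-connected finite CW complex whose cells lie in odd dimensions $3,5,\dots$ and whose dimension is at most $n+1$; in particular $\widetilde H_*(Z;\Z)$ is free and concentrated in odd degrees. I would prove by induction on $k\ge 1$ that the $p$-localization $(Z^{(2k+1)})_{(p)}$ of the $(2k+1)$-skeleton is homotopy equivalent to a finite wedge of $p$-local odd spheres of dimension $\le 2k+1$. The base case is $Z^{(3)}=\bigvee S^3$ (a point if there are no $3$-cells). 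For the inductive step, note $Z^{(2k)}=Z^{(2k-1)}$ (there are no $2k$-cells) and write $Z^{(2k+1)}=Z^{(2k-1)}\cup_f(\text{cells of dimension }2k+1)$ with attaching map $f\colon\bigvee S^{2k}\to Z^{(2k-1)}$. Since $p$-localization carries the mapping cone of a map between simply connected finite complexes to the mapping cone of its localization, $(Z^{(2k+1)})_{(p)}\simeq\mathrm{cofib}(f_{(p)})$, so it suffices to show $f_{(p)}$ is null homotopic; granting this, $(Z^{(2k+1)})_{(p)}\simeq(Z^{(2k-1)})_{(p)}\vee(\bigvee S^{2k+1})_{(p)}$ and the inductive hypothesis finishes the step. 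Taking $k$ with $2k+1\ge\dim Z$ gives $Z^{(2k+1)}=Z$, hence the proposition.

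Thus everything reduces to the claim: if $W=\bigvee_j S^{n_j}$ is a finite wedge of spheres with each $n_j$ odd and $3\le n_j\le 2k-1$, and $2k\le n<p$, then $\pi_{2k}(W)_{(p)}=0$. Here I would invoke the Hilton--Milnor theorem, which writes $\pi_{2k}(W)$ as a direct sum of groups $\pi_{2k}(S^{|w|})$ indexed by the basic products $w$ in the $n_j$, a weight-$\ell$ basic product having dimension $|w|=n_{j_1}+\dots+n_{j_\ell}-(\ell-1)$. The elementary but decisive observation is that $|w|$ is always odd: a sum of $\ell$ odd integers is congruent to $\ell$ modulo $2$, so $|w|\equiv\ell-(\ell-1)\equiv 1\pmod 2$; moreover $|w|\ge 3$ in every weight. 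Consequently, for each $w$: if $|w|>2k$ then $\pi_{2k}(S^{|w|})=0$; and if $|w|\le 2k$ then, $|w|$ being odd, in fact $|w|<2k$, so $\pi_{2k}(S^{|w|})$ is $\pi_{|w|+s}(S^{|w|})$ for a stem $s=2k-|w|$ with $0<s\le 2k-3\le n-3<2p-3$, and by Serre's finiteness theorem together with his vanishing of the $p$-primary component of $\pi_{n+s}(S^n)$ for $0<s<2p-3$, this group has trivial $p$-part. Summing over $w$, and using that $p$-localization commutes with homotopy groups of simply connected finite complexes, we get $\pi_{2k}(W_{(p)})=\pi_{2k}(W)_{(p)}=0$, as required.

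The real content, and the step I expect to carry the weight of the argument, is exactly this vanishing of $\pi_{2k}$ of a wedge of odd spheres $p$-locally; it rests on two standard inputs from the homotopy theory of spheres — the Hilton--Milnor splitting of $\Omega W$, and Serre's theorem that $\pi_{n+s}(S^n)$ has no $p$-torsion for $0<s<2p-3$ (plus finiteness of $\pi_*(S^{\mathrm{odd}})$ above the bottom cell). The hypothesis $p>n$ enters precisely to guarantee that every stem arising, being at most $n-3$, stays strictly below the threshold $2p-3$ where $p$-torsion first appears, and the parity observation is what keeps us clear of the anomalous free summand in $\pi_{2m}(S^m)$ for even $m$. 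The remaining ingredients — the CW/suspension bookkeeping for $\Sigma X$, and the compatibility of $p$-localization with skeletal filtrations and with mapping cones of maps of simply connected finite complexes — are routine.
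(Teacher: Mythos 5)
Your proof is correct and follows essentially the same route as the paper's: induction over the (odd) skeleta of the suspension, the Hilton--Milnor decomposition of the loop space of a wedge of odd spheres (with the parity observation that all basic products again give odd spheres), and Serre's vanishing of the $p$-primary part of $\pi_{n+s}(S^n)$ for $0<s<2p-3$, which is exactly the fact $\pi_{2j}(S^{2\ell+1})_{(p)}=0$ for $j<\ell+p-1$ that the paper cites. Your version merely spells out the localization/cofiber bookkeeping in more detail.
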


\begin{proof}
Induct on the skeleta of $X$. We may assume the 0-skeleton is a point since $X$ is connected, so the claim is trivially true for the 0-skeleton. Suppose that $\Sigma X^{(2k-2)}_{(p)}\simeq\bigvee_{i=1}^{k-1}\bigvee^{m_i}S^{2i+1}_{(p)}$. Then the attaching maps of  $(2k+1)$-cells of $\Sigma X_{(p)}$ are identified with maps $S^{2k}\to\bigvee_{i=1}^{k-1}\bigvee^{m_i}S^{2i+1}_{(p)}$. By the Hilton-Milnor theorem, $\Omega(\bigvee_{i=1}^{k-1}\bigvee^{m_i}S^{2i+1}_{(p)})$ is homotopy equivalent to a weak product of the loop spaces of $p$-local odd spheres of dimension $\ge 3$. Then since $p>k$ and $\pi_{2j}(S^{2\ell+1})_{(p)}=0$ for $j<\ell+p-1$, the attaching maps are null homotopic, hence the induction proceeds.
\end{proof}

\section{Non-triviality of the projection $\pi$}

Let $M$ be a quasitoric manifold over an $n$-polytope $P$ and let $\pi\colon\ZZ_{K(P)}\to M$ denote the projection. By Corollary \ref{main-projection-trivial}, $\Sigma \pi_{(p)}$ is trivial for $p>n$. So one would ask whether $\pi_{(p)}$ and $\Sigma^\infty\pi$ are trivial or not. This section shows non-triviality of $\pi_{(p)}$ and examines non-triviality of $\Sigma^\infty\pi$ for quasitoric manifolds over a product of simplices and low dimensional quasitoric manifolds. We first consider the $p$-localization.

\begin{proposition}
The $p$-localization $\pi_{(p)}$ is not null homotopic.
\end{proposition}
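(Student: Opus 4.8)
The plan is to exhibit a cohomological obstruction that survives $p$-localization. Recall from the homotopy fibration \eqref{fibration} that the fibre inclusion $T^{m-n}\to\ZZ_{K(P)}$ has the property, by Proposition \ref{H^2}(1), that the transgression $H^1(T^{m-n};\Z)\to H^2(M;\Z)$ is an isomorphism. The first observation I would make is that $\ZZ_{K(P)}$ is $2$-connected with $H^3(\ZZ_{K(P)};\Z)$ possibly nonzero, and more to the point that $\pi^*\colon H^2(M;\Z)\to H^2(\ZZ_{K(P)};\Z)=0$ is the zero map, since the $2$-dimensional classes of $M$ transgress nontrivially from the fibre. So $\pi$ is certainly not detected on $H^2$. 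Instead I would look one dimension up: since $H^*(M;\Z)$ is generated by $H^2(M;\Z)$ (Proposition \ref{H^2}(3)) and $H^{\mathrm{odd}}(M;\Z)=0$ (Proposition \ref{h-vector}), the cup square of a generator gives a nonzero class in $H^4(M;\Z)$ whenever $n\ge 2$, and the Serre spectral sequence of the fibration $T^{m-n}\to\ZZ_{K(P)}\to M$ forces this class to be hit, i.e. there is a nonzero element of $H^3(\ZZ_{K(P)};\Z)$, and $\pi^*$ on $H^4$ kills such a square. This already shows $\pi$ is not a homology monomorphism, but that alone does not obstruct null-homotopy, so I need a secondary operation.

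The real argument should use the multiplicative structure via Massey products or, more simply, the observation that $\ZZ_{K(P)}$ is $2$-connected while $M$ is not simply a product. Here is the cleanest route I can see. Take the case $n=1$ first as a sanity check: then $P$ is an interval, $K(P)$ is two points, $\ZZ_{K(P)}=S^3$, $M=\C P^1=S^2$, and $\pi$ is the Hopf map, whose $p$-localization is nontrivial because $\pi_3(S^2)_{(p)}=\Z_{(p)}$ for every prime $p$ (it is detected by the Hopf invariant, equivalently by the nontrivial cup product structure on $\C P^2$ obtained from the cofibre of $\pi$). The plan is to reduce the general case to this one by restriction to a suitable subcomplex. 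Concretely, every simple $n$-polytope $P$ has a vertex $v$, which corresponds to a maximal simplex $\sigma=\{i_1,\ldots,i_n\}$ of $K(P)$, and the facets of $P$ meeting at $v$ determine $n$ of the characteristic submanifolds. Choosing one facet $F$ that does \emph{not} contain $v$ (for $m>n$; if $m=n$ then $P$ is a simplex and $M=\C P^n$, handled separately) and looking at the corresponding $S^1\subset\ZZ_{K(P)}$, one finds an embedded $S^3\hookrightarrow\ZZ_{K(P)}$ whose composite with $\pi$ maps to a $\C P^1\subset M$ via the Hopf map, up to sign. That composite is nontrivial $p$-locally, so $\pi_{(p)}$ cannot be null.

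A cleaner way to package this, and the one I would actually write: the two-dimensional cohomology classes of $M$ restrict to a basis described combinatorially, and for a quasitoric manifold one can always find an embedded $\C P^1$ (the closure of a one-dimensional orbit) on which some characteristic class $x_j\in H^2(M;\Z)$ restricts to a generator. Pulling back the sub-$S^3$ of $\ZZ_{K(P)}$ lying over this $\C P^1$ under $\pi$, we get a commuting square with the Hopf fibration, and naturality of $p$-localization together with the nontriviality of the Hopf map $p$-locally (Hopf invariant one, detected by $Sq^2$ rationally — or rather by the nonzero cup product in the mod-$p$ cohomology of the cofibre, which is just $\Z/p$ in degrees $0,2,4$) finishes it. The main obstacle I anticipate is making precise the existence of the sub-$S^3\to\C P^1$ compatible with $\pi$: one must verify that the restriction of the free $T^{m-n}$-action to the relevant $S^3$ is the Hopf action, which amounts to checking that the characteristic function $\lambda$ of $M$, when restricted to the appropriate coordinates, is the identity — this is standard but needs the local structure of quasitoric manifolds near a fixed point, so I would set it up carefully using the fact that near any fixed point $M$ looks like $\C^n$ with the standard $T^n$-action and $\ZZ_{K(P)}$ correspondingly looks like $S^{2n-1}\times(D^2)^{m-n}$ or similar. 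Everything else is naturality.
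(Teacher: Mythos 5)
Your reduction to the Hopf map has a genuine gap. From the commuting square
$$\xymatrix{S^3\ar[r]\ar[d]^{\eta}&\ZZ_{K(P)}\ar[d]^{\pi}\\ \C P^1\ar[r]&M}$$
a null-homotopy of $\pi_{(p)}$ only gives a null-homotopy of the \emph{composite} $S^3\to\C P^1\hookrightarrow M$ after $p$-localization; it says nothing about the map $S^3\to\C P^1$ itself, because there is no way to push the null-homotopy back into the subspace $\C P^1$ of the target (you would need a retraction $M\to\C P^1$, which in general does not exist). And the composite $S^3\to\C P^1\hookrightarrow M$ is very often already null homotopic: the fibration $T^{m-n}\to\ZZ_{K(P)}\xrightarrow{\pi}M$ shows $\pi_3(M)\cong\pi_3(\ZZ_{K(P)})\cong H_3(\ZZ_{K(P)};\Z)$, which vanishes for instance when $P=\Delta^n$ with $n\ge 2$ (there $\ZZ_{K(P)}=S^{2n+1}$ and $\pi_3(\C P^n)=0$). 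So in exactly the case you propose to use as the base of the reduction, the square exists but yields no contradiction. Your $n=1$ sanity check works only because there $M=\C P^1$ itself. Note that the paper's Lemma \ref{criterion-proj}, which resembles what you are attempting, crucially requires a map $q\colon M\to\C P^k$ \emph{out of} $M$ making the square commute; you have not produced such a map, and for the unstable statement at hand none is available in general.

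The paper's actual proof is entirely different and avoids this issue: if $\pi_{(p)}$ were null homotopic, the fibration $T^{m-n}\to\ZZ_{K(P)}\xrightarrow{\pi}M$ would split $p$-locally as $T^{m-n}_{(p)}\simeq(\ZZ_{K(P)})_{(p)}\times\Omega M_{(p)}$. Since $\ZZ_{K(P)}$ is a simply connected, $p$-locally non-contractible finite complex, the McGibbon--Neisendorfer theorem forces torsion in $\pi_*(\ZZ_{K(P)})_{(p)}$, contradicting the torsion-freeness of $\pi_*(T^{m-n}_{(p)})$. If you want to salvage a "detect it on a subcomplex" argument, you would need to identify a class in $\pi_*(M)_{(p)}$ that is provably nonzero and provably killed by a null-homotopy of $\pi$; the isomorphism $\pi_*(\ZZ_{K(P)})\cong\pi_{*}(M)$ in degrees $\ge 2$ shows this amounts to finding non-torsion (or $p$-torsion) elements of $\pi_*(\ZZ_{K(P)})$, which is essentially what McGibbon--Neisendorfer supplies wholesale.
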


\begin{proof}
Recall that there is a homotopy fibration \eqref{fibration}. Then if $\pi_{(p)}$ were null homotopic, we would have $T^{m-n}_{(p)}\simeq(\ZZ_{K(P)})_{(p)}\times\Omega M_{(p)}$. It is shown in \cite{MN} that if $X$ is a simply connected $p$-local finite complex which is not contractible, then $\pi_*(X)$ has torsion. By \eqref{fibration} we also see that $\ZZ_{K(P)}$ is not contractible at any prime $p$, since $M_{(p)}$ is $p$-locally finite but $BT^{m-n}_{(p)}$ is not. Then since $\ZZ_{K(P)}$ is simply connected, we get that $T^{m-n}_{(p)}\simeq(\ZZ_{K(P)})_{(p)}\times\Omega M_{(p)}$ has torsion in homotopy groups, a contradiction.
\end{proof}

We next consider non-triviality of $\Sigma^\infty\pi$ for quasitoric manifolds over a product of simplices. We start with the easiest case. Recall that the complex projective space $\C P^n$ is the only quasitoric manifold over the $n$-simplex $\Delta^n$, and that the projection $\pi$ is the canonical map $S^{2n+1}\to\C P^n$. Then since the cofiber of $\pi$ is $\C P^{n+1}$ whose top cell does not split after stabilization, one sees that $\Sigma^\infty\pi$ is not null homotopic. We here record this almost trivial fact.

\begin{lemma}
\label{CP^k}
The projection $\pi\colon\ZZ_{K(\Delta^n)}\to\C P^n$ is not null homotopic after stabilization.
\end{lemma}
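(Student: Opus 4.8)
The plan is to deduce non-triviality of $\Sigma^\infty\pi$ from the fact that the top cell of its mapping cone $\C P^{n+1}$ does not split off after stabilization, and to detect this by a mod $p$ Steenrod operation for a prime $p$ chosen according to $n$. First I would recall that $K(\Delta^n)=\partial\Delta^n$, that $\ZZ_{\partial\Delta^n}$ is the topological boundary of the polydisc $(D^2)^{n+1}$ and hence homeomorphic to $S^{2n+1}$, and that under this identification $\pi$ becomes the canonical $S^1$-bundle projection $S^{2n+1}\to\C P^n$, whose mapping cone is $\C P^{n+1}$. Then, if $\Sigma^\infty\pi$ were null homotopic, the cofiber sequence $\Sigma^\infty S^{2n+1}\xrightarrow{\Sigma^\infty\pi}\Sigma^\infty\C P^n\to\Sigma^\infty\C P^{n+1}$ would split, giving a stable equivalence $\Sigma^\infty\C P^{n+1}\simeq\Sigma^\infty\C P^n\vee\Sigma^\infty S^{2n+2}$; consequently, for every prime $p$, the reduced cohomology $H^*(\C P^{n+1};\Z/p)$ would split, as a module over the mod $p$ Steenrod algebra, into a part concentrated in degrees $\le 2n$ and a one-dimensional part in degree $2n+2$, so that no Steenrod operation of positive degree could send a class of degree $\le 2n$ onto a nonzero multiple of the top class $x^{n+1}$, where $x\in H^2$ is the canonical generator.

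The main point is then to produce such an operation. From the instability relation $P^1(x)=x^p$ at an odd prime (respectively $Sq^2(x)=x^2$ when $p=2$) together with the Cartan formula, one has $P^i(x^k)=\binom{k}{i}x^{\,k+i(p-1)}$, and taking $i=1$ gives
$$P^1(x^{\,n+2-p})=(n+2-p)\,x^{n+1}=(n+2)\,x^{n+1}\qquad\text{in }H^*(\C P^{n+1};\Z/p).$$
Hence it suffices to find a prime $p$ satisfying $2\le p\le n+1$ (so that $x^{\,n+2-p}$ has degree between $2$ and $2n$) and $p\nmid n+2$ (so that the coefficient $n+2$ is a unit modulo $p$); such a prime contradicts the previous paragraph. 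The prime must be allowed to depend on $n$: for $p=2$ the relevant operation is $Sq^2(x^n)=n\,x^{n+1}$, which is nonzero only for odd $n$.

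Finally I would check that the required prime exists for every $n\ge 1$. Let $p$ be the smallest prime not dividing $n+2$. If $p\ge 5$, then all primes less than $p$ divide $n+2$, so their product---which already exceeds $p$---divides $n+2$, whence $p\le n+1$; and the cases $p=2$ (here $n+2$ is odd, so $n+2\ge3>2$) and $p=3$ (here $n+2$ is even, so $n+2\ge4>3$) follow at once from $n\ge1$. The one genuinely non-formal ingredient of the whole argument is this last step, forced by the failure of $Sq^2$ for even $n$: one chooses the prime from the arithmetic of $n+2$ and uses the elementary fact that the least prime not dividing $n+2$ is at most $n+1$. (Alternatively, one could simply cite the classical fact that $\Sigma^\infty\C P^{n+1}$ never splits off its top cell.) The remaining steps---identifying $\pi$, computing the mapping cone, and translating a stable wedge splitting into a splitting of Steenrod modules---are routine.
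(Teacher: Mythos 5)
Your proof is correct and follows essentially the same route as the paper: identify $\ZZ_{K(\Delta^n)}\cong S^{2n+1}$ with $\pi$ the canonical map, observe the cofiber is $\C P^{n+1}$, and conclude from the non-splitting of its top cell. The only difference is that the paper simply cites this non-splitting as classical, whereas you supply a full proof via $P^1(x^{n+2-p})=(n+2)x^{n+1}$ together with the elementary observation that some prime $p\le n+1$ fails to divide $n+2$; both the Steenrod computation and the number-theoretic step check out.
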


It is helpful to recall the fact on moment-angle complexes regarding products of simple polytopes. For simple polytopes $P_1,P_2$ the product $P_1\times P_2$ is also a simple polytope and $K(P_1\times P_2)=K(P_1)*K(P_2)$, the join of $K(P_1)$ and $K(P_2)$. By definition we have $\ZZ_{K(P_1\times P_2)}=\ZZ_{K(P_1)*K(P_2)}=\ZZ_{K(P_1)}\times\ZZ_{K(P_2)}$, and in particular $\ZZ_{K(P_1)}$ is a retract of $\ZZ_{K(P_1\times P_2)}$. We prepare a simple lemma.

\begin{lemma}
\label{criterion-proj}
Let $P$ be a simple polytope, and let $M$ be a quasitoric manifold over $P\times\Delta^k$. If there is a map $q\colon M\to\C P^k$ satisfying a homotopy commutative diagram
$$\xymatrix{\ZZ_{K(P\times\Delta^k)}\ar[r]^{\rm proj}\ar[d]^\pi&\ZZ_{K(\Delta^k)}\ar[d]^\pi\\
M\ar[r]^q&\C P^k,}$$
then the projection $\pi\colon\ZZ_{K(P\times\Delta^k)}\to M$ is not null homotopic after stabilization.
\end{lemma}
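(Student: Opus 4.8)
The plan is to use the given homotopy commutative diagram to reduce the non-triviality of $\Sigma^\infty\pi\colon\ZZ_{K(P\times\Delta^k)}\to M$ to that of $\Sigma^\infty\pi\colon\ZZ_{K(\Delta^k)}\to\C P^k$, which is already established in Lemma \ref{CP^k}. The point is that the top row of the diagram, namely the projection $\ZZ_{K(P\times\Delta^k)}\to\ZZ_{K(\Delta^k)}$ coming from the product decomposition $\ZZ_{K(P\times\Delta^k)}=\ZZ_{K(P)}\times\ZZ_{K(\Delta^k)}$, admits a section. Indeed, choosing a basepoint $*\in\ZZ_{K(P)}$ gives an inclusion $\ZZ_{K(\Delta^k)}=\{*\}\times\ZZ_{K(\Delta^k)}\hookrightarrow\ZZ_{K(P)}\times\ZZ_{K(\Delta^k)}$ which is a right homotopy inverse to the projection.

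First I would suspend everything (work with $\Sigma^\infty$) and contemplate the resulting diagram in the stable homotopy category. Suppose for contradiction that $\Sigma^\infty\pi\colon\Sigma^\infty\ZZ_{K(P\times\Delta^k)}\to\Sigma^\infty M$ is null homotopic. Precompose the homotopy commutative square with the stable section $s\colon\Sigma^\infty\ZZ_{K(\Delta^k)}\to\Sigma^\infty\ZZ_{K(P\times\Delta^k)}$ of the top horizontal map. Going around the square one way, the composite $\Sigma^\infty q\circ\Sigma^\infty\pi\circ s$ equals $\Sigma^\infty\pi\circ(\text{proj})\circ s=\Sigma^\infty\pi$ (using that $(\text{proj})\circ s\simeq\mathrm{id}$ on $\Sigma^\infty\ZZ_{K(\Delta^k)}$), where here $\pi$ denotes the canonical map $\ZZ_{K(\Delta^k)}=S^{2k+1}\to\C P^k$. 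Going around the other way, the composite factors through $\Sigma^\infty\pi\colon\Sigma^\infty\ZZ_{K(P\times\Delta^k)}\to\Sigma^\infty M$, which by assumption is null. Hence $\Sigma^\infty\pi\colon S^{2k+1}\to\C P^k$ is stably null homotopic, contradicting Lemma \ref{CP^k}. This yields the desired conclusion.

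The only subtlety, and the step I expect to require the most care, is verifying that the projection $\ZZ_{K(P\times\Delta^k)}\to\ZZ_{K(\Delta^k)}$ genuinely admits a homotopy section compatible with the stabilization — but this is immediate from the honest product decomposition $\ZZ_{K(P\times\Delta^k)}=\ZZ_{K(P)}\times\ZZ_{K(\Delta^k)}$ recalled just above the statement, since an honest product projection has an honest section given by inclusion of a slice through a basepoint, and $\Sigma^\infty$ is functorial. Everything else is formal diagram chasing in the stable category together with the invocation of Lemma \ref{CP^k}; no computation is involved. I would present the argument in essentially the three sentences above, being explicit about the section $s$ and about which instance of $\pi$ appears on the right-hand vertical map.
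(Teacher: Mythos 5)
Your argument is correct and is essentially the paper's own proof: both use the section of the projection coming from the product decomposition $\ZZ_{K(P\times\Delta^k)}=\ZZ_{K(P)}\times\ZZ_{K(\Delta^k)}$ to identify $\Sigma^\infty(q\circ\pi)$ restricted along that section with the stably nontrivial map of Lemma \ref{CP^k}, and then conclude that $\Sigma^\infty\pi$ cannot be null. The only difference is that you phrase it as a contradiction while the paper argues directly that $\Sigma^\infty(q\circ\pi)=\Sigma^\infty q\circ\Sigma^\infty\pi$ is nontrivial.
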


\begin{proof}
Since $\ZZ_{K(\Delta^k)}$ is a retract of $\ZZ_{K(P\times\Delta^k)}$, it follows from Lemma \ref{CP^k} that $\Sigma^\infty(q\circ\pi)$ is not null homotopic. Therefore since $\Sigma^\infty(q\circ\pi)=\Sigma^\infty q\circ\Sigma^\infty\pi$, the proof is completed.
\end{proof}

There is a class of generic quasitoric manifolds over a product of simplices called generalized Bott manifolds which have been intensively studied in toric topology. See \cite{CMS} for details. 
By definition a generalized Bott manifold $B$ over $\Delta^{n_1}\times\cdots\times\Delta^{n_\ell}$ satisfies a commutative diagram
$$\xymatrix{\ZZ_{K(\Delta^{n_1}\times\cdots\times\Delta^{n_\ell})}\ar[r]\ar[d]^\pi&\ZZ_{K(\Delta^{n_1}\times\cdots\times\Delta^{n_{\ell-1}})}\ar[r]\ar[d]^\pi&\cdots\ar[r]&\ZZ_{K(\Delta^{n_1}\times\Delta^{n_2})}\ar[r]\ar[d]^\pi&\ZZ_{K(\Delta^{n_1})}\ar[d]^\pi\\
B_\ell\ar[r]^{q_\ell}&B_{\ell-1}\ar[r]^{q_{\ell-1}}&\cdots\ar[r]^{q_2}&B_2\ar[r]^{q_1}&B_1}$$
where the upper horizontal arrows are the projections. Since $B_1=\C P^{n_1}$, we get the following by Lemma \ref{criterion-proj}

\begin{corollary}
\label{Bott}
If $B$ is a generalized Bott manifold over $\Delta^{n_1}\times\cdots\times\Delta^{n_\ell}$, then the projection $\pi\colon\ZZ_{K(\Delta^{n_1}\times\cdots\times\Delta^{n_\ell})}\to B$ is not null homotopic after stabilization.
\end{corollary}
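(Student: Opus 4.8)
The plan is to obtain this as an immediate application of Lemma \ref{criterion-proj}, using the defining diagram of a generalized Bott manifold as a telescope. Set $P:=\Delta^{n_2}\times\cdots\times\Delta^{n_\ell}$ and reindex so that $\Delta^{n_1}\times\cdots\times\Delta^{n_\ell}$ is viewed as $P\times\Delta^{n_1}$; take $M:=B=B_\ell$ and $q:=q_1\circ q_2\circ\cdots\circ q_{\ell-1}\colon B_\ell\to B_1=\C P^{n_1}$. The only thing left to check is the hypothesis of Lemma \ref{criterion-proj}, namely that the square with left edge $\pi\colon\ZZ_{K(P\times\Delta^{n_1})}\to B_\ell$, top edge the projection $\ZZ_{K(P\times\Delta^{n_1})}\to\ZZ_{K(\Delta^{n_1})}$, bottom edge $q$, and right edge $\pi\colon\ZZ_{K(\Delta^{n_1})}\to\C P^{n_1}$ commutes up to homotopy.

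First I would record the formal identification underlying everything: since $K$ of a product of simple polytopes is the join and $\ZZ_{-}$ of a join is the product, $\ZZ_{K(\Delta^{n_1}\times\cdots\times\Delta^{n_j})}$ is canonically $\ZZ_{K(\Delta^{n_1})}\times\cdots\times\ZZ_{K(\Delta^{n_j})}$, and under this identification the top horizontal map $\ZZ_{K(\Delta^{n_1}\times\cdots\times\Delta^{n_j})}\to\ZZ_{K(\Delta^{n_1}\times\cdots\times\Delta^{n_{j-1}})}$ in the generalized Bott diagram is the coordinate projection that forgets the last factor. Consequently, the composite of all the top horizontal maps, $\ZZ_{K(\Delta^{n_1}\times\cdots\times\Delta^{n_\ell})}\to\ZZ_{K(\Delta^{n_1})}$, is precisely the projection onto the $\ZZ_{K(\Delta^{n_1})}$ factor, i.e. the map labelled $\mathrm{proj}$ in Lemma \ref{criterion-proj} with $k=n_1$.

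Next I would paste the $\ell-1$ homotopy commutative squares of the generalized Bott manifold along their shared vertical edges. Travelling down the left column gives $\pi\colon\ZZ_{K(\Delta^{n_1}\times\cdots\times\Delta^{n_\ell})}\to B_\ell$; travelling across the top gives the projection just identified; travelling across the bottom gives $q=q_1\circ\cdots\circ q_{\ell-1}$; travelling down the right column gives $\pi\colon\ZZ_{K(\Delta^{n_1})}\to B_1=\C P^{n_1}$. This is exactly the homotopy commutative square required by Lemma \ref{criterion-proj}, so that lemma applies and shows that $\Sigma^\infty\pi\colon\Sigma^\infty\ZZ_{K(\Delta^{n_1}\times\cdots\times\Delta^{n_\ell})}\to\Sigma^\infty B$ is not null homotopic.

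There is essentially no obstacle here beyond the bookkeeping: all the squares are given by the definition of a generalized Bott manifold, and the only genuine point is the compatibility of the product identification $\ZZ_{K(P_1\times P_2)}\cong\ZZ_{K(P_1)}\times\ZZ_{K(P_2)}$ with the coordinate projections, together with the harmless reordering that moves the $\Delta^{n_1}$ factor into the last slot so that Lemma \ref{criterion-proj} applies verbatim.
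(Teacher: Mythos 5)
Your proposal is correct and is essentially the paper's own argument: the authors likewise paste the defining ladder of the generalized Bott manifold, take $q=q_1\circ\cdots\circ q_{\ell-1}\colon B_\ell\to B_1=\C P^{n_1}$, identify the composite of the top arrows with the coordinate projection onto $\ZZ_{K(\Delta^{n_1})}$ via $\ZZ_{K(P_1\times P_2)}=\ZZ_{K(P_1)}\times\ZZ_{K(P_2)}$, and invoke Lemma \ref{criterion-proj}. Your write-up just makes explicit the bookkeeping (the reordering of factors and the compatibility of the product identification with projections) that the paper leaves implicit.
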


In order to examine non-triviality of $\Sigma^\infty\pi$ for quasitoric manifolds other than Bott manifolds, we give a cohomological generalization of Lemma \ref{criterion-proj}. 

\begin{lemma}
\label{criterion-1}
Let $X$ be a space such that $H^2(X;\Z)=\Z\langle x_1,\ldots,x_k\rangle$ and $H^*(X;\Z/p)$ is generated by the mod $p$ reduction of $x_1,\ldots,x_k$ as a ring, and let $F$ be the homotopy fiber of a map $\alpha=(x_1,\ldots,x_k)\colon X\to BT^k$. Suppose the following conditions hold:
\begin{enumerate}
\item There are $x\in H^{2\ell-2i}(BT^k;\Z/p)$ and transgressive $a\in H^{2\ell-1}(F;\Z/p)$ such that
$$\tau(a)=\theta(x)$$
for some degree $2i$ Steenrod operation $\theta$.
\item There is a map $f\colon S^{2\ell-1}\to F$ such that $f^*(a)\ne 0$ in mod $p$ cohomology.
\end{enumerate}
Then the stabilization of the fiber inclusion $F\to X$ is not null homotopic.
\end{lemma}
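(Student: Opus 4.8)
The plan is to argue by contradiction: suppose the stabilization of the fiber inclusion $F\to X$ is null homotopic, and derive a contradiction from the existence of the nonzero Steenrod operation witnessed by condition (1) together with the homotopy class of condition (2). First I would observe that if $\Sigma^\infty(F\to X)$ is null homotopic, then after stabilization the cofiber sequence $F\to X\to X/F$ splits, so $\Sigma^\infty X\simeq\Sigma^\infty F\vee\Sigma^\infty(X/F)$ stably, and in particular the projection $X\to X/F$ admits a stable section. Equivalently, in mod $p$ cohomology the map $H^*(X/F;\Z/p)\to H^*(X;\Z/p)$ is a split injection of $\mathcal{A}_p$-modules (where $\mathcal{A}_p$ is the mod $p$ Steenrod algebra), and dually $H^*(F;\Z/p)$ is a retract of $H^*(X;\Z/p)$ as a module over $\mathcal{A}_p$ via the fiber inclusion.

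Next I would analyze the Serre spectral sequence of the fibration $F\to X\xrightarrow{\alpha} BT^k$ with $\Z/p$ coefficients. Since $H^*(BT^k;\Z/p)$ is a polynomial algebra on the degree-$2$ classes dual to $x_1,\dots,x_k$, and since by hypothesis $H^*(X;\Z/p)$ is generated as a ring by the reductions of $x_1,\dots,x_k$, the map $\alpha^*$ is surjective; hence the spectral sequence collapses onto the base in the sense that every class of $H^*(F;\Z/p)$ in positive degree is eventually hit by a differential, i.e.\ $F$ has trivial $\mathcal{A}_p$-action-free part and the transgression $\tau\colon H^{2\ell-1}(F;\Z/p)\to H^{2\ell}(BT^k;\Z/p)$ is defined on the transgressive class $a$ with $\tau(a)=\theta(x)$ as in condition (1). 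The key point is that if the stable splitting existed, the class $a\in H^{2\ell-1}(F;\Z/p)$ would lift to a stable cohomology class $\tilde a\in H^{2\ell-1}(X;\Z/p)$ restricting to $a$; but then, chasing the spectral sequence, the relation $\tau(a)=\theta(x)$ forces $\theta$ applied to the lift of $x$ (which exists in $H^{2\ell-2i}(X;\Z/p)$ since $\alpha^*$ is onto) to be related to $\tilde a$ in a way incompatible with $\tilde a$ being a permanent class — more precisely, the transgression being $\theta(x)\ne 0$ means $a$ does not survive, so no such stable lift $\tilde a$ can exist, contradicting the splitting. To turn this into a clean contradiction I would instead use condition (2) directly: the map $f\colon S^{2\ell-1}\to F$ with $f^*(a)\ne 0$ shows $a$ is spherical, so in the Atiyah--Hirzebruch or the stable cohomotopy picture, $a$ detects a nonzero element of $\pi_{2\ell-1}^s(F)$ (after suitable localization); pushing forward along $F\to X$ and using that $\Sigma^\infty(F\to X)$ is assumed null, this element must die in $\pi_{2\ell-1}^s(X)$, which is fine a priori — so the real leverage must come from the Steenrod operation.

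The cleanest route, which I would adopt, is: compose $f$ with the fiber inclusion to get $g=\iota\circ f\colon S^{2\ell-1}\to X$; if $\Sigma^\infty\iota\simeq *$ then $\Sigma^\infty g\simeq *$, so the cofiber $C_g$ of $g$ splits stably as $X\vee S^{2\ell}$ (after stabilization), forcing all Steenrod operations into the top cell $S^{2\ell}$ of $C_g$ to vanish. On the other hand, I would show that the attaching data of $g$, together with the relation $\tau(a)=\theta(x)$ and the surjectivity of $\alpha^*$, produces a nonzero operation $\theta$ from the class in $H^{2\ell-2i}(C_g;\Z/p)$ pulled back from $H^{2\ell-2i}(X;\Z/p)$ to the top class of $C_g$: indeed $f^*(a)\ne 0$ is exactly the statement that the generator of $H^{2\ell-1}(S^{2\ell-1})$ pairs nontrivially with $a$, and the functional operation / secondary argument converts $\tau(a)=\theta(x)$ into $\theta$ acting nontrivially across the two cells of $C_g$. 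This contradiction finishes the proof. The main obstacle I anticipate is making the last step precise — namely, correctly invoking the functional cohomology operation (Peterson--Stein relations) to convert the transgression identity $\tau(a)=\theta(x)$ in the Serre spectral sequence of $F\to X\to BT^k$ into a genuine nonzero Steenrod operation in $H^*(C_g;\Z/p)$, and checking that the lift of $x$ along $\alpha^*$ interacts with $g$ in the required way; the topology (the stable splitting of the cofiber) is routine once that algebraic input is in hand.
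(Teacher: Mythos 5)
Your final route --- set $g=i\circ f\colon S^{2\ell-1}\to X$, observe that $\Sigma^\infty i\simeq *$ would make $C_g$ split stably as $X\vee S^{2\ell}$, and contradict this with a Steenrod operation carrying a class pulled back from $X$ onto the top cell of $C_g$ --- is exactly the paper's strategy. However, the step you yourself flag as ``the main obstacle,'' namely converting $\tau(a)=\theta(x)$ into the assertion that $\theta$ acts nontrivially across the two cells of $C_g$, is the entire mathematical content of the lemma, and you leave it unproved; that is a genuine gap. No functional operations or Peterson--Stein relations are needed. The paper closes it with a primary diagram chase: since $\alpha\circ i$ is null homotopic, $\alpha^*$ lifts to relative cohomology $H^*(BT^k;\Z/p)\to H^*(X,F;\Z/p)$, and the statement that $a$ is transgressive with $\tau(a)=\theta(x)$ says precisely that $\delta(a)=\alpha^*(\theta(x))=\theta(\alpha^*(x))$ in $H^{2\ell}(X,F;\Z/p)$, where $\delta$ is the connecting map of the pair. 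The map $f$ induces a map from the cofibration $S^{2\ell-1}\to X\to C_g$ to the pair $(X,F)$, and naturality of $\delta$ and of $\theta$ gives $\theta(\bar x)=\delta(f^*(a))$ in $H^{2\ell}(C_g;\Z/p)$, where $\bar x\in H^{2\ell-2i}(C_g;\Z/p)$ is the (unique, since $j^*$ is an isomorphism below degree $2\ell-1$) lift of $\alpha^*(x)\in H^{2\ell-2i}(X;\Z/p)$ and $\delta\colon H^{2\ell-1}(S^{2\ell-1};\Z/p)\to H^{2\ell}(C_g;\Z/p)$ is injective onto the top-cell summand. Condition (2) then forces $\theta(\bar x)\ne 0$, so the summand $A\cong H^*(X;\Z/p)$ of $H^*(C_g;\Z/p)$ is not closed under $\theta$, contradicting the stable splitting $C_g\simeq X\vee S^{2\ell}$. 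Without some version of this chase, your proof does not go through.

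Two further points in your exploratory paragraphs are wrong as stated, although you abandon them before the final argument. If $\Sigma^\infty i\simeq *$, the conclusion is $\Sigma(X/F)\simeq\Sigma X\vee\Sigma^2 F$ (the cofiber of a stably null map splits), not $\Sigma^\infty X\simeq\Sigma^\infty F\vee\Sigma^\infty(X/F)$; and far from $H^*(F;\Z/p)$ being a retract of $H^*(X;\Z/p)$ via $i^*$, stable nullity forces $i^*=0$ on reduced cohomology. Likewise, a stably null $i$ does not produce a lift $\tilde a\in H^{2\ell-1}(X;\Z/p)$ of $a$. These misstatements do not contaminate your ``cleanest route,'' but they suggest the splitting formalism should be handled more carefully than in your first paragraph.
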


\begin{proof}
Let $i\colon F\to X$ and $j\colon X\to C_{i\circ f}$ denote the inclusions. Then there is a commutative diagram of exact sequences
$$\xymatrix{0\ar[r]&H^{2\ell-1}(S^{2\ell-1};\Z/p)\ar[r]^\delta&H^{2\ell}(C_{i\circ f};\Z/p)\ar[r]^{j^*}&H^{2\ell}(X;\Z/p)\\
&H^{2\ell-1}(F;\Z/p)\ar[r]^\delta\ar[u]_{f^*}&H^{2\ell}(X,F;\Z/p)\ar[u]_{f^*}&H^{2\ell}(BT^k;\Z/p)\ar[l]_{\alpha^*}\ar[u]_{\alpha^*}.}$$
Put $\bar{x}:=f^*\circ\alpha^*(x)$. Since $\tau(a)=\theta(x)$, we have $\theta(\bar{x})=f^*\circ\alpha^*(\theta(x))=\delta\circ f^*(a)\ne 0$, where $\theta(x)=\tau(a)\ne 0$ since $H^{\rm odd}(X;\Z/2)=0$. Then we obtain
$$H^*(C_{i\circ f};\Z/p)\cong A\oplus\langle\theta(\bar{x})\rangle,\quad A\cong H^*(X;\Z/p)$$
as modules, implying that $\theta(A)\not\subset A$ by $\bar{x}\in A$. If $\Sigma^\infty i$ were null homotopic, we would have $\theta(A)\subset A$ which contradicts to the above calculation, so $\Sigma^\infty i$ is not null homotopic. 
\end{proof}

We apply Lemma \ref{criterion-1} to quasitoric manifolds over a product of two simplices which are not necessarily generalized Bott manifolds.

\begin{proposition}
\label{two}
If $M$ is a quasitoric manifold over $\Delta^k\times\Delta^{n-k}$ and neither $n+2$ nor $n-k+2$ is a power of $2$, then $\Sigma^\infty\pi$ is not null homotopic.
\end{proposition}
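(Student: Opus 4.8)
The plan is to apply Lemma \ref{criterion-1} with $X = M$, $k+1$ replaced by the appropriate Picard number, and $F = \ZZ_{K(\Delta^k\times\Delta^{n-k})}$, which by the fibration \eqref{fibration} is the homotopy fiber of $\alpha\colon M\to BT^{m-n}$; here $m = (k+1)+(n-k+1) = n+2$, so $m-n = 2$ and we are looking at $\alpha=(x_1,x_2)\colon M\to BT^2$ for suitable generators $x_1,x_2$ of $H^2(M;\Z)$. Proposition \ref{H^2} gives exactly the cohomological hypotheses of Lemma \ref{criterion-1}: $H^2(M;\Z)$ is free of the right rank and $H^*(M;\Z/p)$ is generated in degree $2$. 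The moment-angle complex $\ZZ_{K(\Delta^k\times\Delta^{n-k})} = \ZZ_{K(\Delta^k)}\times\ZZ_{K(\Delta^{n-k})} = S^{2k+1}\times S^{2n-2k+1}$, so its mod $p$ cohomology is an exterior algebra on classes $a_1$ in degree $2k+1$ and $a_2$ in degree $2n-2k+1$. The fibration $T^2\to \ZZ\to M$ has the two exterior generators transgressing to $x_1^{k+1}$ and $x_2^{n-k+1}$ respectively (this is the standard description of $H^*(M)$ as $\Z[x_1,x_2]/(x_1^{k+1}+\cdots,\, x_2^{n-k+1}+\cdots)$ coming from the Davis--Januszkiewicz presentation over a product of simplices), so $\tau(a_i)$ is, mod $p$, a polynomial whose leading term is a pure power of $x_i$.

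The key step is to produce, at a suitable prime $p$, a degree $2i$ Steenrod operation $\theta$ and a class $x\in H^*(BT^2;\Z/p)$ with $\theta(x) = \tau(a_j)$ for $j=1$ or $2$, together with a map $f\colon S^{2\ell-1}\to \ZZ$ detecting $a_j$. For the detection map $f$: since $\ZZ = S^{2k+1}\times S^{2n-2k+1}$, the inclusions of the two sphere factors give maps detecting $a_1$ and $a_2$ on the nose, so hypothesis (2) of Lemma \ref{criterion-1} is free. For the Steenrod operation: work at the prime $p=2$ and use that $\tau(a_1) = x_1^{k+1} + (\text{lower-weight terms in }x_1,x_2)$ in $H^{2k+2}(M;\Z/2)$; pulling back along $\alpha^*$, $x_1^{k+1}$ is a genuine $(k+1)$-st power in the polynomial ring $H^*(BT^2;\Z/2)=\Z/2[x_1,x_2]$. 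One checks that the total Steenrod square of a monomial $x_1^a x_2^b$ is $(x_1+x_1^2)^a(x_2+x_2^2)^b$, so a monomial $x_1^ax_2^b$ with $a+b = \ell-i$ hits $x_1^{k+1}x_2^{\,?}$-type monomials of total degree $2\ell$ under $\mathrm{Sq}^{2i}$ precisely when the relevant binomial coefficients $\binom{a}{k+1-a}$ etc.\ are odd; the hypothesis that $n+2 = 2(k+1)+2(n-k+1)-\,\cdots$ — more precisely that $n+2$ is not a power of $2$, equivalently $2k+2$ (when $k = n-k$) or, in general, the relevant degree — is not a power of $2$ guarantees by Lucas' theorem that some such coefficient is odd, so $\theta = \mathrm{Sq}^{2i}$ with $x$ a suitable monomial works; the two alternative conditions ``$n+2$ not a power of $2$'' and ``$n-k+2$ not a power of $2$'' correspond to trying to detect via $a_1$ (degree $2k+1$, with $\tau(a_1)$ of degree $2k+2$, sitting inside the cofiber whose top cell has degree $\dim M + 1 = 2n+1$, so one wants $x_1^{k+1}$ to be a $\mathrm{Sq}$-target, requiring $\dim M/T^2$-related non-power-of-$2$ condition) versus via $a_2$. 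I would carry out the two cases symmetrically, reducing each to a Lucas-theorem divisibility statement. Having found $\theta$ and $x$, Lemma \ref{criterion-1} applied to $F = \ZZ \hookrightarrow M = X$ immediately yields that $\Sigma^\infty\pi$ is not null homotopic.

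The main obstacle I anticipate is the bookkeeping in identifying $\tau(a_i)$ precisely and then verifying that the ``lower-weight terms'' do not destroy the argument: a priori $\mathrm{Sq}^{2i}$ of the chosen monomial $x$ could be a sum $x_1^{k+1}(\cdots) + (\text{other terms})$, and one needs that modulo $\alpha^*(\ker) = (\tau(a_1),\tau(a_2))$ and modulo $A \cong H^*(M;\Z/p)$ the class $\theta(\bar x)$ is genuinely nonzero in the cofiber's top cell. This is handled by choosing $x$ of minimal filtration so that $\theta(x)$'s leading term is exactly the generator $\tau(a_j)$ dual to $f$, and noting that all other terms of $\theta(x)$ lie in lower-degree or lower-weight pieces of $A$ that do not interact with the attached top cell of $C_{i\circ f}$; the non-power-of-$2$ hypothesis is exactly what is needed to run Lucas' theorem and choose such an $x$. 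If the prime $2$ argument is obstructed in some edge case one can instead run the analogous computation at an odd prime $p$ using the Steenrod power operations $P^i$ and the mod $p$ Lucas congruences, which is why the hypothesis is phrased purely in terms of powers of $2$ (the binary obstruction being the genuinely binding one).
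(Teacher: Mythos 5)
Your overall strategy is the same as the paper's: apply Lemma \ref{criterion-1} with $X=M$ and $F=\ZZ_{K(\Delta^k\times\Delta^{n-k})}=S^{2k+1}\times S^{2(n-k)+1}$ at the prime $2$, get condition (2) for free from the sphere factors, and get condition (1) from Lucas' theorem. But there is a genuine gap precisely at the point you yourself flag as ``the main obstacle.'' Lemma \ref{criterion-1} demands an exact equality $\tau(a)=\theta(x)$ in $H^*(BT^2;\Z/2)$, whereas you only control the leading term $x_1^{k+1}$ (resp.\ $x_2^{n-k+1}$) of the transgression and defer the cross terms to the unproved assertion that they ``do not interact with the attached top cell.'' That is not an argument: a priori no monomial $x$ and no Steenrod operation $\theta$ need satisfy $\theta(x)=x_1^{k+1}+(\text{cross terms})$ on the nose, and nothing in your setup rules this out. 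The paper closes exactly this gap by quoting the computation of Choi--Masuda--Suh \cite{CMS}: one may present $H^*(M;\Z/2)\cong\Z/2[x,y]/(x^{k'-\ell+1}(x+y)^\ell,\;y^{n-k'+1})$ with $\{k',n-k'\}=\{k,n-k\}$, so that one of the two relations is literally a pure power $y^N$, $N=n-k'+1$. Hence the corresponding sphere class $a\in H^{2N-1}(F;\Z/2)$ transgresses to $t^N$ exactly (for degree reasons), and the one-variable computation $\mathrm{Sq}^{2j}(t^{N-j})=\binom{N-j}{j}\,t^N$ together with Lucas' theorem produces the required $\theta$. Without this input from \cite{CMS}, your condition (1) is not established.

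A secondary problem is that your translation of the hypothesis into a divisibility statement is never pinned down: the sentence beginning ``$n+2=2(k+1)+2(n-k+1)-\cdots$'' does not identify which binomial coefficient must be odd, and the closing suggestion to retreat to odd primes ``in some edge case'' is speculation, not proof. The precise bookkeeping is that $t^N$ lies in the image of a positive-degree Steenrod square applied to a lower power of $t$ if and only if $\binom{N-j}{j}$ is odd for some $j\ge 1$, i.e.\ if and only if $N+1=n-k'+2$ is not a power of $2$; this is exactly where the ``not a power of $2$'' hypotheses of the proposition are consumed, and it is the step your write-up leaves implicit.
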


\begin{proof}
By Proposition \ref{CP^k} we may assume $0<k<n$. By \cite{CMS} the mod 2 cohomology of $M$ is given by
$$H^*(M;\Z/2)=\Z/2[x,y]/(x^{k'-\ell+1}(x+y)^\ell,y^{n-k'+1})$$
for some $\ell\ge 0$, where $k'=k$ or $k'=n-k$.
It is sufficient to check that $M$ satisfies the conditions of Lemma \ref{criterion-1}.
By Proposition \ref{H^2} $M$ satisfies the conditions for the space $X$ of Lemma \ref{criterion-1}.
It follows immediately from Lucas' theorem that $\theta(t^r)=t^{n-k'+1}$ for some Steenrod operation $\theta$ and $t\in H^2(BT^2;\Z/2)$ corresponding to $y$, so the latter part of the condition (1) is satisfied.
Since $\ZZ_{K(\Delta^k\times\Delta^{n-k})}=S^{2k+1}\times S^{2(n-k)+1}$, there is $a\in H^{2(n-k')+1}(\ZZ_{K(\Delta^k\times\Delta^{n-k})};\Z/2)$ satisfying $\tau(a)=t^{n-k'+1}$ for a degree reason.
Then the former part of the condition (1) is also satisfied. Moreover, the element $a$ is spherical, so the condition (2) is satisfied. 
\end{proof}

We next specialize Lemma \ref{criterion-1} for applications to low dimensional quasitoric manifolds.



\begin{proposition}
\label{criterion-x^2}
Let $M$ be a quasitoric manifold. If there is non-zero $x\in H^2(M;\Z/2)$ satisfying $x^2=0$, then $\Sigma^\infty\pi$ is not null homotopic. 
\end{proposition}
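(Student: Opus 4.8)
\textbf{Proof plan for Proposition \ref{criterion-x^2}.}

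The plan is to derive this as a consequence of Lemma \ref{criterion-1} by producing, from the single class $x$ with $x^2=0$, all the data required there: a Steenrod operation $\theta$, a transgressive class $a$ in the fiber $\ZZ_{K(P)}$, and a sphere detecting $a$. Since $x\in H^2(M;\Z/2)$ and $\mathrm{Sq}^2 x = x^2 = 0$, the class $x$ behaves like a $2$-dimensional class on which $\mathrm{Sq}^2$ vanishes; the natural candidate for $\theta$ is $\mathrm{Sq}^3 = \mathrm{Sq}^1\mathrm{Sq}^2$, which is a degree $3$ operation, but degrees in Lemma \ref{criterion-1} are even ($2i$), so instead I would use the secondary structure: lift $x$ to the fiber. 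Concretely, by Proposition \ref{H^2}, $x = \alpha^*(\bar x)$ for a unique $\bar x \in H^2(BT^{m-n};\Z/2)$, and since $x^2 = 0$ in $H^4(M;\Z/2)$ while $\bar x^2 \ne 0$, the class $\bar x^2$ must die when pulled back along $\alpha$, hence transgresses from a class $a \in H^3(\ZZ_{K(P)};\Z/2)$ with $\tau(a) = \bar x^2 = \theta(\bar x)$ where $\theta = \mathrm{Sq}^2$ has degree $2$. This gives condition (1) of Lemma \ref{criterion-1} with $\ell = 2$, $i = 1$.

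The remaining point is condition (2): the class $a \in H^3(\ZZ_{K(P)};\Z/2)$ must be spherical, i.e. there is $f\colon S^3 \to \ZZ_{K(P)}$ with $f^*(a) \ne 0$. Here I would invoke the structure of the moment-angle complex and the homotopy equivalence \eqref{BBCG}: $H^3(\ZZ_{K(P)};\Z/2) \cong \bigoplus_{\emptyset \ne I} \widetilde H^{3-|I|-1}(|K(P)_I|;\Z/2)$, and a degree-$3$ class corresponds to $|I| = 1$, i.e. to $\widetilde H^{1}$ of a single point — which vanishes — or to $|I|=2$, i.e. $\widetilde H^{0}(|K(P)_I|;\Z/2)$ for a two-element $I = \{i,j\}$ with $\{i,j\} \notin K(P)$ (a missing edge). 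Thus $a$ corresponds to a pair of facets of $P$ with empty intersection. For such $I$, the full subcomplex $K(P)_I$ consists of two disjoint points, the corresponding wedge summand $\Sigma^{|I|+2}|K(P)_I| = \Sigma^{4}(S^0 \sqcup *)$ contributes an $S^3$ after desuspension, and the inclusion $\ZZ_{K(P)_I} = S^1 \times S^1 \hookrightarrow \ZZ_{K(P)}$ followed by projection to a factor (or rather, the obvious map detecting the relevant $3$-cell $S^1 \wedge S^2$ inside $S^1 \times S^1$... ) detects $a$. More cleanly: $\ZZ_{K(P)_{\{i,j\}}}$ for a missing edge is $S^1 \times S^1 \times T^{\text{rest}}$... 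I would instead argue that the $3$-dimensional generator of $H^*(\ZZ_{K(P)};\Z/2)$ coming from a missing edge is dual to a $3$-sphere arising from $\partial(D^2\times D^2) \cap \ZZ_K \simeq S^3$ sitting in the two disk coordinates indexed by $I$ together with the circle coordinates elsewhere; this $S^3$ maps in and hits $a$.

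\textbf{Main obstacle.} The genuinely delicate step is verifying sphericity of $a$, i.e. condition (2): one must check that the specific degree-$3$ cohomology class whose transgression equals $\bar x^2$ is exactly the one detected by a concrete $3$-sphere in $\ZZ_{K(P)}$, rather than some other degree-$3$ class. This requires matching the algebraic identification $x = \alpha^*\bar x$ against the topological/combinatorial description \eqref{BBCG}, pinning down which missing edge $\{i,j\}$ of $P$ corresponds to the relation $x^2 = 0$. In the generic situation $x$ need not be one of the "facet" generators of $H^2(M;\Z/2)$, so one should first reduce to a change of basis: choose the $T^k = T^2$ (or a rank-$2$ summand of $T^{m-n}$) so that $x$ and one complementary class span the relevant part of $H^2$, then observe that $x^2=0$ forces a missing edge in the corresponding sub-moment-angle-complex $S^1\times S^1 \subset \ZZ_{K(P)}$, whose fundamental class in degree $3$ (the class of $S^1 \wedge S^2 \subset S^1 \times S^1$, which is spherical since it is the image of the Whitehead-product-free $3$-cell) transgresses to $\bar x^2$. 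Once this identification is made, Lemma \ref{criterion-1} applies verbatim and concludes that $\Sigma^\infty \pi$ is not null homotopic.
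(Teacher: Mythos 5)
Your reduction to Lemma \ref{criterion-1} and your treatment of condition (1) match the paper's proof: lift $x$ to $\bar x\in H^2(BT^{m-n};\Z/2)$ via Proposition \ref{H^2}, note that $\bar x^2=\mathrm{Sq}^2\bar x$ pulls back to $x^2=0$, and conclude (using $2$-connectivity of $\ZZ_{K(P)}$, so that the transgression $d_4$ is the only differential that can kill $E^{4,0}$) that $\tau(a)=\bar x^2$ for some transgressive $a\in H^3(\ZZ_{K(P)};\Z/2)$. (Your opening detour through $\mathrm{Sq}^3$ is a dead end, but you abandon it and land on the right operation $\theta=\mathrm{Sq}^2$ with $\ell=2$, $i=1$.)

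The gap is exactly where you flag it: condition (2). You treat sphericity of $a$ as a delicate matter requiring the decomposition \eqref{BBCG}, an identification of which missing edge of $K(P)$ carries $a$, and an explicit $3$-sphere in $\ZZ_{K(P)}$ mapping onto it; you do not carry this out, and the change-of-basis/"which summand is it" discussion is left as a plan rather than an argument. But none of that is needed. The space $\ZZ_{K(P)}$ is $2$-connected, so by the Hurewicz theorem $\pi_3(\ZZ_{K(P)})\to H_3(\ZZ_{K(P)};\Z)$ is an isomorphism, and since $H_2=0$ the universal coefficient theorem gives $H^3(\ZZ_{K(P)};\Z/2)\cong\mathrm{Hom}(H_3(\ZZ_{K(P)};\Z),\Z/2)$; hence \emph{every} nonzero class in $H^3(\ZZ_{K(P)};\Z/2)$ is detected by a map from $S^3$. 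Since $a\ne 0$ (its transgression $\bar x^2$ is nonzero), condition (2) holds with no need to locate $a$ in the wedge decomposition. This is the paper's one-line argument; supplying it closes your gap and makes the rest of your proposal go through.
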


\begin{proof}
It is sufficient to check that the conditions of Lemma \ref{criterion-1} are satisfied. Let  $P$ be a polytope on which $M$ stands. Since $\ZZ_{K(P)}$ is 2-connected, there is $a\in H^3(\ZZ_{K(P)};\Z/2)$ satisfying $\tau(a)=t^2$, where $t\in H^2(BT^{m-n};\Z/2)$ satisfies $\alpha^*(t)=x$. Then for $t^2=\mathrm{Sq}^2t$, the condition (1) of Lemma \ref{criterion-1} is satisfied. We also have that the Hurewicz map $\pi_3(\ZZ_{K(P)})\to H_3(\ZZ_{K(P)};\Z)$ is an isomorphism, so any element of $H^3(\ZZ_{K(P)};\Z/2)$ is spherical. Then the condition (2) of Lemma \ref{criterion-1} is satisfied, and therefore the proof is done. 
\end{proof}

We now apply Proposition \ref{criterion-x^2} to low dimensional quasitoric manifolds.

\begin{corollary}
\label{h}
If $M$ is a 4-dimensional quasitoric manifold, then $\Sigma^\infty\pi$ is not null homotopic.
\end{corollary}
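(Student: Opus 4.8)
The plan is to deduce this from Proposition \ref{criterion-x^2}: it suffices to produce a non-zero class $x\in H^2(M;\Z/2)$ with $x^2=0$. Since $M$ is $4$-dimensional it is a quasitoric manifold over a simple $2$-polytope $P$, i.e. an $m$-gon for some $m\ge 3$, so $P$ has $f_0(P)=m$ vertices and $f_1(P)=m$ edges.

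First I would record the relevant Betti numbers. From the definition of the $h$-vector one computes $h_1(P)=f_0(P)-2=m-2$ and $h_2(P)=f_1(P)-f_0(P)+1=1$, so Proposition \ref{h-vector} gives $H^2(M;\Z)\cong\Z^{m-2}$ and $H^4(M;\Z)\cong\Z$ with no odd cohomology and no torsion; hence $H^2(M;\Z/2)\cong(\Z/2)^{m-2}$ and $H^4(M;\Z/2)\cong\Z/2$. The cup-square map $x\mapsto x^2$ on $H^2(M;\Z/2)$ coincides with $\mathrm{Sq}^2$ in degree $2$, hence is additive, so it is a linear map $H^2(M;\Z/2)\to H^4(M;\Z/2)\cong\Z/2$.

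Next, assume $m\ge 4$. Then this linear map has domain of dimension $m-2\ge 2$ and $1$-dimensional target, so its kernel is non-zero; any non-zero $x$ in the kernel satisfies $x^2=0$, and Proposition \ref{criterion-x^2} finishes this case. The remaining case is $m=3$, where $P=\Delta^2$ and $M=\C P^2$: here the cup-square map is an isomorphism so Proposition \ref{criterion-x^2} does not apply, but Lemma \ref{CP^k} already gives that $\Sigma^\infty\pi$ is not null homotopic. Combining the two cases completes the proof.

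The only genuine subtlety is precisely this edge case $M=\C P^2$, for which the cohomological criterion fails and one must fall back on the explicit argument of Lemma \ref{CP^k}; everything else is routine $h$-vector bookkeeping together with the elementary observation that a linear map to $\Z/2$ out of a space of dimension at least $2$ has non-trivial kernel.
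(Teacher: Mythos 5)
Your proposal is correct and follows essentially the same route as the paper: compute $h_1(P)=m-2$ and $h_2(P)=1$ for an $m$-gon, use the linearity of the mod $2$ cup-square to find a nonzero $x\in H^2(M;\Z/2)$ with $x^2=0$ when $m\ge 4$, apply Proposition \ref{criterion-x^2}, and handle $P=\Delta^2$ separately via Lemma \ref{CP^k}. The only difference is that you spell out the additivity of the squaring map, which the paper leaves implicit.
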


\begin{proof}
Suppose that the quasitoric manifold $M$ stands over a 2-polytope $P$. If $P=\Delta^2$, the corollary follows from Lemma \ref{CP^k} since $\C P^2$ is the only quasitoric manifold over $\Delta^2$. If $P\ne\Delta^2$, then $P$ is a $k$-gon for $k\ge 4$, hence $h_2(P)=1<k-2=h_1(P)$. Then it follows from  Proposition \ref{h-vector} that $\dim H^4(M;\Z/2)<\dim H^2(M;\Z/2)$, implying that there must be non-zero $x\in H^2(M;\Z/2)$ satisfying $x^2=0$. Thus the proof is completed by Proposition \ref{criterion-x^2}.
\end{proof}

\begin{remark}
We here remark that $h_1(P)=h_2(P)$ by the Dehn-Sommerville equation for $\dim P=3$ and $h_1(P)>h_2(P)$ for $\dim P>3$ by the $g$-theorem (cf. \cite{BP}), so the argument in the proof of Corollary \ref{h} does not work for $\dim P\ge 3$.
\end{remark}

\begin{corollary}
\label{cube}
If $M$ is a quasitoric manifold over the 3-cube, then $\Sigma^\infty\pi$ is not null homotopic.
\end{corollary}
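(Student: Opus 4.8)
The plan is to apply Proposition~\ref{criterion-x^2}, so the entire task reduces to producing a nonzero class $x\in H^2(M;\Z/2)$ with $x^2=0$ whenever $M$ stands over the $3$-cube $I^3=\Delta^1\times\Delta^1\times\Delta^1$. First I would record that the $3$-cube is a product of simplices, so any quasitoric manifold over it has mod $2$ cohomology described by the combinatorial data of the characteristic matrix; concretely, $H^2(M;\Z/2)$ is generated by the classes $x_1,x_2,x_3$ dual to the three pairs of opposite facets, and the Stanley--Reisner--type relations coming from $K(I^3)$ give three quadratic relations, one for each pair of opposite facets, expressing $x_i^2$ as a linear combination $x_i(a_{i1}x_1+a_{i2}x_2+a_{i3}x_3)$ for suitable $a_{ij}\in\Z/2$ (with $a_{ii}=1$). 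Equivalently, after the change of basis absorbing the linear relations coming from $\alpha^*$ being an isomorphism on $H^2$, one has $H^*(M;\Z/2)=\Z/2[x_1,x_2,x_3]/(x_i^2-\lambda_i\text{, linear in the }x_j)$; the point is that the squaring operation $x\mapsto x^2$ on the $3$-dimensional $\Z/2$-vector space $H^2(M;\Z/2)$ is a (Frobenius-semilinear, hence $\Z/2$-linear) map into $H^4(M;\Z/2)$.

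Next I would count dimensions. For the $3$-cube the $h$-vector of the dual simplicial polytope (the octahedron boundary) is $(h_0,h_1,h_2,h_3)=(1,3,3,1)$, so by Proposition~\ref{h-vector} we get $\dim H^2(M;\Z/2)=3$ while $\dim H^6(M;\Z/2)=1$. The squaring map $H^2\to H^4$ lands in a $3$-dimensional space, so a naive rank count does not immediately force a kernel. The better route is to use the ring structure: Poincar\'e duality over $\Z/2$ makes the pairing $H^2\times H^4\to H^6=\Z/2$ nondegenerate, and likewise the pairing $H^2\times H^2\times H^2\to H^6$ is a symmetric trilinear form $\mu$ on the $3$-dimensional space $H^2(M;\Z/2)$. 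What I want is a nonzero $x$ with $x^3=\mu(x,x,x)=0$ \emph{and} $x^2=0$; but in fact it suffices to observe that over $\Z/2$ a symmetric trilinear form on a $3$-dimensional space always has a nonzero isotropic vector for the associated quadratic-in-disguise condition, or more simply that $x\mapsto x^2$ being $\Z/2$-linear $H^2\to H^4$ with $3$-dimensional source, composed with \emph{any} nonzero linear functional, yields a linear map to $\Z/2$ with nontrivial kernel; iterating, I would show the intersection of kernels over a spanning set of functionals is forced to be nonzero by a parity/rank argument specific to this small case. The cleanest formulation: pick the functional ``pair with $x_1$'' $:H^4\to\Z/2$; then $x\mapsto x_1x^2=x_1\cdot x^2$ is linear $H^2\to\Z/2$, hence has a $2$-dimensional kernel $W$; on $W$ the map $x\mapsto x^2$ lands in the annihilator of $x_1$, a $2$-dimensional subspace of $H^4$, so $x\mapsto x^2|_W$ is a linear map from a $2$-dimensional space to a $2$-dimensional space, and I claim it is not injective — if it were, $W$ would contain an element squaring to the Poincar\'e dual generator, and tracing this through the explicit relations $x_i^2=(\text{linear})\cdot x_i$ shows every square is a multiple of some $x_i$, a zero-divisor direction, contradicting surjectivity onto a complement; this pins down a nonzero $x$ with $x^2=0$.

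The main obstacle is the dimension-counting step: unlike Corollary~\ref{h}, where $\dim H^4<\dim H^2$ makes the existence of an $x$ with $x^2=0$ immediate, here the Dehn--Sommerville symmetry $h_1=h_2$ means $\dim H^2=\dim H^4=3$, so one genuinely needs to exploit the multiplicative structure rather than mere ranks. I expect the honest argument to go through the explicit presentation $H^*(M;\Z/2)=\Z/2[x_1,x_2,x_3]/\mathcal{I}$ where $\mathcal{I}$ is generated by the three quadrics $x_i(x_i+\sum_{j}a_{ij}x_j)$ attached to the three coordinate ``missing faces'' of the octahedron, and to check by a short case analysis on the characteristic matrix (finitely many $\Z/2$-possibilities, up to symmetry) that in every case one of the linear forms $x_i+\sum_j a_{ij}x_j$ either vanishes or coincides with some $x_k$, so that the corresponding $x_i$ (or $x_i+x_k$) squares to zero. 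Once such an $x$ is exhibited and seen to be nonzero in $H^2(M;\Z/2)$, Proposition~\ref{criterion-x^2} applies verbatim and gives that $\Sigma^\infty\pi$ is not null homotopic, completing the proof.
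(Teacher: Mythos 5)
Your overall strategy is exactly the paper's: reduce to Proposition~\ref{criterion-x^2} by exhibiting a nonzero $x\in H^2(M;\Z/2)$ with $x^2=0$, using the explicit presentation of $H^*(M;\Z/2)$ for quasitoric manifolds over a cube from \cite{CMS,H} and a finite case check. But there is a genuine gap in the execution: the case analysis cannot be closed from the Stanley--Reisner-type presentation alone, and you never write down the constraints on the coefficients that make it work. The paper uses the presentation $\Z/2[x,y,z]/(x^2+x(ay+bz),\,y^2+y(cx+dz),\,z^2+z(ex+fy))$ \emph{together with} the conditions $ac=df=0$ and $\det\begin{pmatrix}1&c&e\\a&1&f\\b&d&1\end{pmatrix}=1$ coming from the (normalized) characteristic matrix; assuming every nonzero square is nonzero, these force $(c,d,e,f)=(0,1,1,0)$ and then the determinant vanishes, a contradiction. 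Without the determinant condition your claim is simply false: taking $x^2=xy$, $y^2=yz$, $z^2=zx$ (i.e.\ $(a,b,c,d,e,f)=(1,0,0,1,1,0)$), the monomials $xy,xz,yz$ form a basis of the degree-$4$ part and one checks directly that all seven nonzero degree-$2$ classes have nonzero square --- this ring is only ruled out because its determinant is $0$.

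Two further points. First, your proposed mechanism ``one of the linear forms $\ell_i$ vanishes or coincides with some $x_k$, so $x_i$ or $x_i+x_k$ squares to zero'' fails in its second branch: if $x_i^2=x_ix_k$ then $(x_i+x_k)^2=x_ix_k+x_k^2$, which vanishes only if additionally $x_k^2=x_ix_k$ (in the example above $x^2=xy$ but $(x+y)^2=xy+yz\neq0$). Second, the linear-algebra detour in your middle paragraph (the functional ``pair with $x_1$'', the claimed non-injectivity on $W$, and the assertion that ``every square is a multiple of some $x_i$'') is not substantiated and, as the example shows, cannot succeed without the arithmetic input from the characteristic matrix; you should delete it and instead carry out the short coefficient case analysis with the conditions $ac=df=0$ and $\det=1$ in hand.
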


\begin{proof}
It is calculated in \cite{CMS,H} that the mod 2 cohomology of $M$ is given by
$$H^*(M;\Z/2)=\Z/2[x,y,z]/(x^2+x(ay+bz),y^2+y(cx+dz),z^2+z(ex+fy))$$
for $a,b,c,d,e,f\in\Z/2$ satisfying
$$ac=df=0,\quad\begin{vmatrix}1&c&e\\a&1&f\\b&d&1\end{vmatrix}=1.$$
We now suppose that $w^2\ne 0$ for all non-zero $w\in H^2(M;\Z/2)$. Then for $x_1^2\ne 0$ we have $(a,b)$ is either $(1,0),(0,1),(1,1)$. Consider the case $(a,b)=(1,0)$. That $a=1$ implies $c=0$, so $d=1$ since $y^2\ne 0$. Then $f=0$, implying $e=1$ since $z^2\ne 0$. Hence we obtain $\begin{vmatrix}1&c&e\\a&1&f\\b&d&1\end{vmatrix}=\begin{vmatrix}1&0&1\\1&1&0\\0&1&1\end{vmatrix}=0$, a contradiction. In the case $(a,b)=(0,1),(1,1)$ we can similarly get $(c,d,e,f)=(0,1,1,0)$, so a contradiction occurs. Thus there is non-zero $w\in H^2(M;\Z/2)$ with $w^2=0$, and therefore the proof is done by Proposition \ref{criterion-x^2}.
\end{proof}

For the last we dare to conjecture the following from Proposition \ref{criterion-x^2} and Corollary \ref{Bott}, \ref{two}, \ref{h}, \ref{cube}.

\begin{conjecture}
For any quasitoric manifold $M$, $\Sigma^\infty\pi$ is not null homotopic.
\end{conjecture}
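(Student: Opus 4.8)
Since $\ZZ_{K(P)}$ is $2$-connected and $H^*(M)$ is generated by $H^2(M)$ (Proposition \ref{H^2}), the map $\pi^*$ vanishes in positive degrees, so $\Sigma^\infty\pi$ is rationally trivial and any obstruction to its triviality is $p$-primary. The plan is to show that for every quasitoric manifold $M$ there is a prime $p$ at which the cofiber $C_\pi$ does not split stably as $M\vee\Sigma\ZZ_{K(P)}$. Were it to split, one would have $H^*(C_\pi;\Z/p)\cong H^*(M;\Z/p)\oplus\widetilde{H}^{*-1}(\ZZ_{K(P)};\Z/p)$ as modules over the mod $p$ Steenrod algebra; as the underlying extension of graded groups is already this one (again because $\pi^*=0$), the task is to violate it as a Steenrod module. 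This is exactly the content of Lemma \ref{criterion-1}, phrased through the transgression $\tau$ in the Serre spectral sequence of \eqref{fibration}: it suffices to produce a prime $p$, a transgressive class $a\in H^{2\ell-1}(\ZZ_{K(P)};\Z/p)$ detected by a map from a sphere, and a Steenrod operation $\theta$ of positive even degree with $\tau(a)=\theta(x)$ for some $x\in H^{2\ell-2i}(BT^{m-n};\Z/p)$.

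To see what the targets look like, write $H^*(M;\Z)=\Z[v_1,\dots,v_m]/(I_{\mathrm{SR}}+J_\lambda)$ and use Proposition \ref{H^2} to identify $H^2(M)$ with $\Z^{m-n}=H^2(BT^{m-n})$; then each $v_j$ becomes a linear form $\ell_j$ in a basis $t_1,\dots,t_{m-n}$ and
$$H^*(M;\Z/p)\cong\Z/p[t_1,\dots,t_{m-n}]/\mathcal{I}_p,$$
where $\mathcal{I}_p$ is the image of $\tau$ and is generated by the products $\ell_{i_1}\cdots\ell_{i_s}$ over the minimal non-faces $\{i_1,\dots,i_s\}$ of $K(P)$, the lowest-degree generators corresponding to $s=s_0$, the least cardinality of a minimal non-face. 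From \eqref{BBCG} one checks that $\ZZ_{K(P)}$ is $(2s_0-2)$-connected and that $\widetilde{H}^{2s_0-1}(\ZZ_{K(P)};\Z/p)$ is carried via $\tau$ by precisely the degree-$2s_0$ products above; by the Hurewicz theorem such classes are spherical, so condition (2) of Lemma \ref{criterion-1} is automatic for them. Thus the conjecture reduces to the algebraic statement: for every quasitoric $M$ there is a prime $p$ at which some element of $\mathcal{I}_p$ of degree $2s_0$ --- or, failing that, a relation of higher degree --- is the value of a positive-degree Steenrod operation on $\Z/p[t_1,\dots,t_{m-n}]$.

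The route through the bottom relations is explicit. Over $\Z/2$, multiplicativity of $\mathrm{Sq}$ together with $\mathrm{Sq}\,\ell=\ell+\ell^2$ gives $\mathrm{Sq}^{2j}(\ell_{i_1}\cdots\ell_{i_r})=\ell_{i_1}\cdots\ell_{i_r}\cdot e_j(\ell_{i_1},\dots,\ell_{i_r})$ with $e_j$ the $j$-th elementary symmetric function, so $\mathrm{Sq}^2(\ell_{i_1}\cdots\ell_{i_{s_0-1}})$ is a nonzero multiple of the relation $\ell_{i_1}\cdots\ell_{i_{s_0}}$ whenever $\ell_{i_1}+\cdots+\ell_{i_{s_0}}=0$ in $H^2(M;\Z/2)$ --- a condition on the characteristic function $\lambda$ --- and there is a parallel criterion at an odd prime using $P^1$ (with $P^1\ell=\ell^p$). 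Proposition \ref{criterion-x^2} and Corollaries \ref{Bott}, \ref{h}, \ref{cube}, together with Proposition \ref{two}, are instances of this principle; the plan is to extend them to a systematic case analysis over the pair $(P,\lambda)$ --- which is what actually determines the forms $\ell_j$ and hence $\mathcal{I}_p$ --- using the $g$-theorem and the known low-dimensional classifications.

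The main obstacle is the potential existence of quasitoric manifolds that are ``Steenrod rigid'', for which no element of $\mathcal{I}_p$ (of lowest or higher degree) is the value of a positive-degree operation at any prime. Excluding them demands a structural grip on the ideal $\mathcal{I}_p\subset\Z/p[t_1,\dots,t_{m-n}]$ --- a complete intersection generated by products of linear forms and cutting out a Poincar\'e duality quotient, with the $\ell_j$ depending on $\lambda$ --- which I do not see how to obtain uniformly. For such $M$, Lemma \ref{criterion-1} is powerless, and one would need a secondary-operation refinement of it, or a change of detecting theory --- for example complex $K$-theory with Adams operations, where integrally
$$\psi^k\Bigl(\prod_{i\in\tau}(1-L_i)\Bigr)=\prod_{i\in\tau}(1-L_i^k)=\Bigl(\prod_{i\in\tau}(1-L_i)\Bigr)\cdot\prod_{i\in\tau}\Bigl(\sum_{a=0}^{k-1}L_i^a\Bigr)$$
deforms a Stanley--Reisner relation by a factor of augmentation $k^{|\tau|}$, potentially yielding a prime-independent obstruction in $\widetilde{K}^0(C_\pi)$. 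Pushing either alternative through for an arbitrary quasitoric manifold --- verifying sphericity of the relevant classes in the higher ranges and computing the module structure of $\widetilde{K}^*(C_\pi)$ --- is the part I expect to be genuinely hard, and is why the statement remains conjectural.
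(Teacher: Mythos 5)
The statement you are addressing is a \emph{conjecture}: the paper offers no proof of it, and neither do you --- your text is explicit that the argument is incomplete, so there is nothing to compare except strategy. What you propose is essentially a systematization of the paper's own partial evidence: the reduction of stable non-triviality of $\pi$ to a failure of $H^*(C_\pi;\Z/p)$ to split as a module over the Steenrod algebra, implemented via the transgression of the fibration \eqref{fibration}, is exactly Lemma \ref{criterion-1}, and your ``lowest Stanley--Reisner relation hit by $\mathrm{Sq}^2$ or $P^1$'' mechanism is exactly how Proposition \ref{two}, Proposition \ref{criterion-x^2}, and Corollaries \ref{h} and \ref{cube} are obtained. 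Your preliminary observations are correct (that $\pi^*=0$ in positive degrees by Proposition \ref{H^2}, that the extension of graded groups splits so only the Steenrod action can obstruct, and that the degree-$(2s_0-1)$ classes of $\ZZ_{K(P)}$ are automatically spherical via \eqref{BBCG} and Hurewicz), and your framing of what remains is accurate.

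The genuine gap is the one you name yourself, and it is not bookkeeping. The hypothesis of Proposition \ref{two} --- that neither $n+2$ nor $n-k+2$ be a power of $2$ --- already shows that for some quasitoric manifolds over a product of two simplices no primary operation at $p=2$ reaches the relevant relation, and Lucas-type obstructions recur at every prime; so ``Steenrod rigid'' manifolds are a demonstrable failure mode of Lemma \ref{criterion-1}, not a hypothetical one. Your proposed escapes (secondary operations, $K$-theory with Adams operations) are unexecuted: the displayed $\psi^k$ computation lives in $K^0(BT^k)$, and you do not explain how it yields an obstruction in $\widetilde K^*(C_{i\circ f})$ for a specific map $f$ from a sphere --- which is where the sphericity condition (2) of Lemma \ref{criterion-1}, the hard geometric input, reappears in any detecting theory. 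One small correction to the speculative part: $\mathcal{I}_p$ cuts out a Poincar\'e duality (Gorenstein Artinian) quotient but is generally \emph{not} a complete intersection, since the number of minimal non-faces of $K(P)$ typically exceeds $m-n$ (already for a pentagon, where five quadrics in three variables are needed). None of this invalidates the program, but as written the conjecture remains exactly as open as it is in the paper.
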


\end{document}